\setlist{nolistsep}
\newtheorem{lemma}{Lemma}
\def\N{\mathbb{N}}
\def\R{\mathbb{R}}
\def \P {\mathbb{P}}
\def \-> {\rightarrow}
\journal{}
\begin{document}

\begin{frontmatter}
\title{Dedicated maintenance and repair shop control for spare parts networks}

\author[label1]{Chaaben Kouki\corref{cor1}}
\address[label1]{Essca School of Management, Department of Operations Management and Decision Sciences, Angers, France}

\cortext[cor1]{Corresponding author}
\ead{chaaben.kouki@essca.fr}

\author[label2]{Melvin Drent}
\ead{m.drent@tue.nl}

\author[label2]{Collin Drent}
\ead{c.drent@tue.nl}

\author[label3]{M. Zied Babai}
\ead{mohamed-zied.babai@kedgebs.com}

\address[label2]{Eindhoven University of Technology, Department of Industrial Engineering and Innovation Sciences, Eindhoven, the Netherlands}

\address[label3]{Kedge Business School, Bordeaux, France}

\begin{abstract}
We study a repairable inventory system dedicated to a single component that is critical in operating a capital good. The system consists of a stock point containing spare components, and a dedicated repair shop responsible for repairing damaged components. Components are replaced using an age-replacement strategy, which sends components to the repair shop either preventively if it reaches the age-threshold, and correctively otherwise. 
Damaged components are replaced by new ones if there are spare components available, otherwise the capital good is inoperable. If there is free capacity in the repair shop, then the repair of the damaged component immediately starts, otherwise it is queued. The manager decides on the number of repairables in the system, the age-threshold, and the capacity of the repair shop. 
There is an inherent trade-off: A low (high) age-threshold reduces (increases) the probability of a corrective replacement but increases (decreases) the demand for repair capacity, and a high (low) number of repairables in the system leads to higher (lower) holding costs, but decreases (increases) the probability of downtime. We first show that the single capital good setting can be modelled as a closed queuing network with finite population, which we show is equivalent to a single queue with fixed capacity and state-dependent arrivals. For this queue, we derive closed-form expressions for the steady-state distribution. We subsequently use these results to approximate performance measures for the setting with multiple capital goods.
\end{abstract}

\begin{keyword}
%% keywords here, in the form: keyword \sep keyword
maintenance \sep inventory \sep repair \sep queues
%% MSC codes here, in the form: \MSC code \sep code
%% or \MSC[2008] code \sep code (2000 is the default)
\end{keyword}

\end{frontmatter}

%%
%% Start line numbering here if you want
%%
% \linenumbers

%% main text
\section{Introduction}
Spare parts are becoming ubiquitous in modern societies. They are used in after-sales services and in maintaining operating systems and capital goods within most companies, which renders their management an important lever to improve the service to customers and to reduce costs \citep{al2013interval, sleptchenko2018joint, topan2020review}. The management of repairable spare parts inventory systems is challenging since it consists not only in controlling the inventories used to satisfy demand triggered by planned and unplanned maintenance activities but also in managing the repair shops that repair the \textcolor{black}{spare} parts often with a limited capacity and replenish the inventory points \citep{tiemessen2013reducing, arts2016repairable, drent2021expediting}. The management of such systems require integrated design and control decisions. The design decisions often relate to  the capacity of the repair shops (e.g., number of repairmen), whereas the control decisions consist in scheduling the production within the repair shops and managing the inventory replenishment in the stock points. Examples of companies that typically make integrated inventory and repair shop decisions for spare parts are service companies such as transportation companies (e.g., trains, subways, and airlines) and military organizations (e.g., frigates and cruisers). 
The planning of the overhauls or the age replacement policy of the various parts is generally fixed when capital assets start servicing and taking into account the age of the parts and their probability of failure since the last replacement/failure.

The modeling of repairable spare parts systems is very complex and depends largely on the typology of the repair shop and the allocation of its capacity repairmen as well as the availability of the required parts for maintenance activities. The literature dealing with these systems is mainly built on a research body using the METRIC (Multi-Echelon Technique for Recoverable Item Control) model that extends since the 1960s \citep{sherbrooke1968metric,  muckstadt2010principles}. However, it is important to notice that most of this research has looked at systems triggered by corrective maintenance activities, and research papers that consider both corrective and preventive maintenance are lacking, especially when the repair centers have a limited capacity. Moreover, this literature typically assumes that failures of capital goods follow an exponential distribution, i.e., the spare parts inventory consumption is given by Poisson demand processes. The reality is that the exponential distribution admits a constant failure rate that does not represent the actual failure rate of a capital good. The use of the exponential distribution is probably due to the fact that the failure rate of several capital goods together could be approximated by an exponential distribution, but in no case does the failure of a single capital good admit an exponential distribution, i.e., a Poisson process for the part's demand \citep{saidane2013performance}. Note that empirical goodness-of-fit experiments conducted for spare parts inventories show that spare parts demand often provides a low fit to the Poisson process \citep{lengu2014spare, turrini2019spare}. Our aim in this paper is to fill the research gaps in the literature presented above.

In this paper, we consider a system with one capital good and one repair shop having a limited capacity. We assume that unplanned maintenance follows a general distribution. Moreover, we include planned maintenance by setting an age for each part installed in the capital good at which the part should be replaced by a new one from the stock point (if any). Our aim is to determine the required inventory level and the optimal repair shop capacity as well as the optimal age-threshold so that the total cost of the system is minimized. For this system, we perform an exact analysis and we provide the optimal stock level as well as the optimal capacity of the repair shop and the optimal threshold age at which planned maintenance should be performed. We then extend the work by studying a system with multiple capital goods and a repair shop with limited capacity. We first consider the case that the repair shop does not differentiate between capital goods and performs the repairs on a first-come, first-served basis. We then allow for the setting where the repair shop differentiates between the capital goods according to their criticality and repairs failed parts from these capital goods according a certain priority rule. For this system, it is difficult to provide an exact analysis due to the fact that this network of capital goods and the common repair shop do not admit a simple steady-state probability of having $n$ spare parts available for each capital good in the network, unlike Jacksonian network systems. We therefore analyze this system approximately using a decomposition algorithm that is based on our exact results obtained in the case a single capital good. This approximate method appears to be quite accurate. Hence, the contribution of this paper is twofold.
\begin{enumerate}
    \item This is the first work that analyses a repairable spare parts system composed of one repair shop with limited capacity and one stock point facing a generally distributed demand. The cases of one capital good and multiple capital goods are considered (an exact analysis is developed in the case of one capital good and an approximate one in the case of multiple capital goods). We propose an algorithm that enables to optimize the base stock level, the number of repairables in the system, the age-threshold, and the capacity of the repair shop. 
\item We consider both corrective and preventive maintenance strategies in a spare parts inventory system with a capacitated repair center. To the best of our knowledge, this is the first paper that makes an integration of both maintenance strategies within the METRIC modelling framework in addition to considering a capacitated repair.
 \end{enumerate}

The remainder of this paper is organised as follows. In Section \ref{sec:LitRev}, we provide an overview of the literature related to spare parts inventory control with a focus on the research dealing with repairable inventory systems. Section \ref{singleCapital} is dedicated to the system description ad the analysis in the case of a single capital good. The analysis is extended in Section \ref{multipleCapital} to the case of a system with multiple capital goods. We present in Section \ref{sec:numericalResults} the results of the numerical investigation. The conclusions of our work are provided in Section \ref{sec:concl} along with some avenues for further research.

%In both cases, the lifetime of the capital goods is assumed to be a general distribution. We set different age-thresholds at which components must be removed for inspection, reconditioning or reparation. 

%It should be noted that the model we consider aims at optimizing the age-threshold at which preventive maintenance must be performed.  There are many cases where preventive maintenance is based on a fixed schedule predetermined by the manufacturer. This does not exclude inspections and preventive maintenance from being carried out according to the conditions of use and the degradation undergone by the assets. 

\section{Literature review} \label{sec:LitRev}
This paper integrates preventive maintenance with spare parts control. As such, it contributes to two streams of literature. The first stream is on repairable spare parts inventory systems while the second one focuses on preventive maintenance.  Below we will describe those papers in these fields and its intersection most related to our research. For a general overview of spare part inventory control, we refer the reader to \cite{kennedy2002overview}, \cite{porras2008inventory}, \cite{lengu2014spare} and \cite{hu2018or}. Likewise, general overviews on maintenance optimization can be found in \cite{dekker1996applications} and \cite{de2019review}.

There is an extensive literature on inventory systems of repairable spare parts. We recall that this literature often considers multi-echelon systems that include a repair shop that replenishes one or several stock points of multiple components dedicated to maintaining a capital good or a set of external customers. The major challenge of this research is to minimize the inventory holding cost at the stocking points and the operating cost of the repair facilities as well as providing fast recovery service to the capital good or customers \citep{tiemessen2013reducing}. This stream of literature can be roughly split into (i) a research area where the stock point is replenished by the repair shop as a single supply source, and (ii) a research body where the stock points are replenished by a dual supply, i.e. an expediting source in addition to the repair shop sourcing.

The earliest contributions on repairable spare parts inventory focus on the METRIC model proposed by \citep{sherbrooke1968metric} and its variants such as the MOD-METRIC model \citep{muckstadt1973model} and the VARI-METRIC model \citep{slay1984vari}. The basic METRIC model consists of a spare parts inventory system consisting of multiple stock points facing demand for spare parts, which are each replenished by a central repair center. The failures of the parts at the stock points occur according to Poisson processes, so that the distribution of parts in the repair center is again Poisson distributed, independent of the service time distribution. Moreover, both the stock points and the repair center are controlled by a base-stock policy. A collection of the research findings related to the METRIC-related research is given in the books by \cite{muckstadt2004analysis} and \cite{muckstadt2010principles}.

It is worth noting that most of the works in the literature on METRIC models assumes ample (infinite) capacity of the repair facility and an infinite parts population in the whole system. Some exceptions that deal -- like the work in this paper -- with capacitated systems can be found in \cite{gross1978spares}, \cite{gross1983closed}, \cite{graves1985multi}, \cite{albright1993steady} and \cite{diaz1997models}. With regard to the literature on capacitated systems, most works rely on closed queueing network models, thereby often being restricted to failure rates given by Poisson or compound-Poisson processes. \cite{graves1985multi} showed the benefit of considering a finite capacity and assuming a compound-Poisson demand process, such that demand processes with varying mean-to-variance rates can be better estimated. This was shown to significantly outperform the basic METRIC model due to a better estimation of the spare parts requirements in the system under a high repair shop utilization.  \cite{diaz1997models} developed extensions where, in addition to the compound Poisson demand process assumption, general repair distributions are considered and the case of different classes of parts in the system, each with a different repair distribution. \cite{zijm2003capacitated} considered a two-indenture maintenance system composed of a capacitated repair shop and a number of identical installations that are considered as assemblies made up of a number of repairable components. They modeled the repair shop and the assembly facility as product form queuing networks, where the component failures of the assemblies occur according to a Poisson process and the the stock points of the assemblies are controlled with a base-stock policy. They developed accurate approximation procedures to evaluate the performance of the system by means of the fill rate and the work in process level. More recently, \cite{park2011multi, park2014approximation} analysed the case of an inventory system composed of multiple stock points that order parts from a central warehouse, which is controlled with a continuous review $(S,Q)$ inventory policy. The failure of the parts follows a Poisson process and the repair time in the repair shop that replenishes the depot follows either an exponential or a two-phase Coxian time distribution. They provided expressions for the steady-state probabilities of the system and proposed an approximation method to estimate the parameters that minimise the costs with a good computational effort. In our paper, we too consider a capacitated system where the repair shop is controlled according to a base-stock policy, but contrary too all the works above, we assume a general distribution for the spare parts demand at the stock points. We also propose an extension where the inventory system is composed of multiple types of capital goods.

With regard to the literature that considers that the stock point can be supplied by a dual sourcing including a repair expediting, we refer to the research by \cite{arts2016repairable} and \cite{drent2021expediting}. This research builds on the early work on dual source inventory systems developed by \cite{moinzadeh1991s}, \cite{song2009inventories} among others. A review on the inventory control of such multiple supply systems is provided by \cite{svoboda2021typology}. \cite{arts2016repairable} analyse a dual sourcing spare parts inventory system with an expedited sourcing. They make the assumption of a Markov modulated Poisson process and relax the assumptions that the demand process is a stationary Poisson process (often considered in the literature). 

It should be noted that all the above described literature assumes, to keep their analyses based on queueing networks tractable, that the spare parts demand and the replenishment of the stock are only based on the failure of the part, which means that it is under a corrective maintenance strategy. The literature does not consider that spare parts have limited lifetimes and can potentially be replaced through a preventive maintenance -- before they fail -- when the lifetime reaches an age-threshold \citep{poppe2017numerical,drent2020censored}. This gap in the literature constitutes one of the main contributions of this paper where we assume that in addition to corrective maintenance due to a failure that generates the replacement of the parts, there is a preventive maintenance age-threshold that triggers the replenishment (i.e. the demand) of parts at the stock point. The age-threshold has an impact on the arrival process at the repair shop, and should thus be taken into account in evaluating the performance of the system as a whole.     

Finally, it is worth mentioning that the idea of integrating corrective and preventive maintenance strategies in spare parts inventory systems is not new. In fact, there is a body of literature that deals with maintenance strategies and the management of spare parts characterized with a limited lifetime \citep[see, e.g.,][]{keizer2017joint,poppe2017numerical, westerweel2019preventive,castro2019opportunistic,salari2020joint}. \cite{poppe2017numerical}, among others, showed that compared to corrective maintenance, preventive maintenance policies increase the demand for spare parts, as some remaining useful lifetime of components may not be used, which increases the inventory and capacity requirements. However, there is an agreement in the literature that the cost of the corrective maintenance, especially if the parts are not available in stock (downtime cost) may be much higher \citep{eruguz2018integrated}. This cost is likely to increase even more if the parts are replenished from a capacitated repair center. This latter effect has not been investigated in the literature because a common assumption is that spare parts are ordered from an outside supplier with ample capacity.  To the best of our knowledge, integrating maintenance strategies when minimizing the inventory costs and the operating costs (the maintenance costs of the capital good and the operating costs at the repair center) in an inventory system of repairable parts has not been investigated in the literature. In this paper, we study this intriguing relationship and fill this gap in the literature.

\section{System description, modelling and analysis} \label{singleCapital}
In this section, we consider the case of one capital good. We describe the system, we derive the steady state probabilities and we present the cost function and its properties. The analysis in the case of a system with multiple capital goods is presented in Section \ref{multipleCapital}. 

\subsection{System description and assumptions}

We consider a repairable inventory system consisting of a stock point responsible for maintaining a single capital good and a repair shop responsible for repairing components. The capital good consists of a number of critical components that fail infrequently and independently. These critical components are crucial for operating the capital good, i.e. the capital good is down if one of these components fails. The components are at such levels in the material breakdown structure of the capital good that they can be replaced as a whole by spare parts. The random lifetime of a critical component, denoted with $X$, is generally distributed with distribution $\P(X\leq x) = F(x)$, $f(x) = \frac{\mathrm{d} F(x)}{\mathrm{d} x}$.

All components are controlled by the canonical age-based maintenance policy introduced by \cite{barlow1960optimum}. That is, components are replaced when their lifetime reaches an age-threshold  $\tau > 0$, or upon failure, whichever comes first.  Components that fail before reaching $\tau$ have severe consequences because it is not planned (e.g. think of an assembly line that is stopped abruptly, trains that stand still, or an airplane crash). On the other hand, components whose ages reach $\tau$ do not have such severe consequences as their maintenance practices can be planned for.

Upon replacement of a component (either preventively or correctively), it is immediately replaced by a new spare part from the stock point. If the required spare part is not on hand, then the capital good remains down until a spare part is available again. We assume that replaced parts can always be repaired (no condemnation) and that these parts are sent to the repair shop immediately, where, if none of the $K$ repair stations are available, joins a queue of parts awaiting repair. 
The repair time is random, and we make the common assumption \cite[see, e.g.,][]{tian2021optimal,li2022after} that this time is exponentially distributed with rate $\mu_r>0$. After the replaced part is repaired it is immediately shipped back to the stock point, where it is then available again. The size of the turn-around stock in our system, $S\in\mathbb{N}_0$, of the repairables is determined at time $t=0$ and cannot be adapted afterwards. Figure \ref{fig:model1} illustrates that this repairable inventory system -- for a single capital good -- can be viewed as a cyclic tandem queue network with a fixed number of customers equal to base stock level $S$.

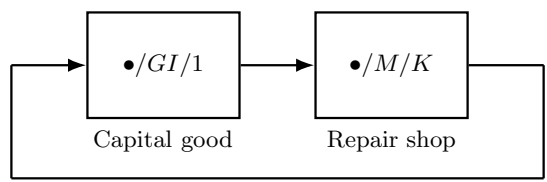
\begin{figure}[ht!]
    \centering
\fontsize{9pt}{12pt}\selectfont
\begin{tikzpicture}[
    triangle/.style = {draw, line width = 0.3mm, regular polygon, regular polygon sides=3, minimum size=2.2cm},
    node rotated/.style = {rotate=180},
    border rotated/.style = {shape border rotate=180},
  ]
	\node (rect2) at (2,1.5) [draw,line width = 0.3mm,minimum width=2cm,minimum height=1.4cm] {$\bullet/GI/1$};
	\node (rect1) at (5,1.5) [draw,line width = 0.3mm,minimum width=2cm,minimum height=1.4cm] {$\bullet/M/K$};
	\node [text width=3cm, align=center, inner sep=0pt] at (2,0.5) {Capital good};
	\node [text width=3cm, align=center, inner sep=0pt] at (5,0.5) {Repair shop};
	\draw [-Latex, line width = 0.3mm] (0,1.5) -- (rect2.west);
	\draw [-Latex, line width = 0.3mm] (rect2.east) -- (rect1.west);
	\filldraw [ line width = 0.3mm] (rect1.east) -- (7,1.5);
	\draw [ line width = 0.3mm] (7,1.5) to (7,0);
	\draw [ line width = 0.3mm] (7,0) -- (0,0);
	\draw [ line width = 0.3mm] (0,0) -- (0,1.5);
\end{tikzpicture}
    \caption{Cyclic tandem queue with a fixed number of customers}
    \label{fig:model1}
\end{figure}

To see this, let $n_o\in\mathbb{N}_0$ and $n_r\in\mathbb{N}_0$, with $n_o+n_r =S$, denote the number of spare parts in stock, including one part is in operation,  and in the repair shop, respectively. Now observe that repaired spare parts arrive at the stock point of the capital good according to a Poisson process with state dependent rates $\mu_r \cdot \min(S-n_0,K)$ whenever $n_r>0$. If there are no failed components in repair, then all spare parts are in stock and one is in operation which implies that $n_o = S$ and thus the arrivals from the repair shop to the stock point of the capital good are interrupted. This is indeed a closed queuing network with finite population $S$. This network admits a stationary probability distribution that can be written as a product form, only in the unrealistic setting that the components' lifetimes follow an exponential distribution.

Fortunately, the queuing network displayed in Figure \ref{fig:model1} corresponds precisely to an $M_n/GI/1$ queue with fixed capacity $S$ and state-dependent Poisson arrival rates $\mu_r \cdot \min(S-n_0,K)$. In fact, \cite{Lavenberg1975} showed that a (state-dependent) $M/GI/1$ queue with finite capacity can be viewed as a closed network and the opposite is also true, where the first queue has a general service time and the the second queue has an exponential service time. In this paper, we use this analogy to compute performance measures of the system, thereby allowing for a general lifetime distribution.  

Let $N(t)$ denotes the number of orders present in the $M_n/GI/1$ queue with fixed capacity $S$ at time $t$. The steady-state probability of having $n$ orders in this queuing system is denoted by $P(n\vert S,\tau, K) = \lim_{t\to\infty} \P(N(t) =n\vert S, \tau,K)$. Costs are levied in the following way. We penalize downtime of the capital good at rate $C_d>0$ per time unit. Furthermore, if a component fails before reaching age-threshold $\tau$, then a cost $C_u>0$ is charged. Per unit of turn-around stock, we pay a cost of $C_a$ per time unit, which can be interpreted as an investment rate. Finally, each part on inventory near the capital good costs $C_h>0$ per time unit, and each unit of capacity $K$ costs $C_w>0$ per time unit. 
We aim to minimize the total sum of these three cost rates. Hence, the optimization problem that we seek to solve is the following multi-variable,
non-linear, non-convex, mixed integer programming problem (\textsc{mipp}):
\begin{align}
(\textsc{mipp}) \quad & \min_{\{S,\tau,K\}} && C_u(1 - P(0\vert S,\tau, K) \mu \P(X<\tau) + C_d P(0 \vert S, \tau, K) + C_a S + C_w K \label{objective} \\
 & \mbox{subject to} &&  S, K \in \N, \quad \tau\geq 0, \nonumber
\end{align}
where 
\begin{align} \label{meanInstall}
\frac{1}{\mu}=\int_{0}^{\tau}(1-F(x))dx
\end{align} 
is the mean installation time of a component given that the age-threshold is equal to $\tau$  (we drop the dependency on $\tau$ for notational clarity). Observe that the value of the objective function \eqref{objective} of problem \textsc{mipp} is solely determined by the steady-state probability distribution $P(n\vert S,\tau, K)$, which we will analyze in the next section.

\subsection{Derivation of the steady-state probabilities}\label{sec4}
In this section, we focus on how to compute the steady-state probability  $P(n\vert S,\tau, K)$, which in turn relies on the random installation time of a component. Due to the nature of the age-based maintenance policy, we must first express this general distribution as the minimum between the random lifetime variable $X$ and the age-threshold $\tau$. The following Lemma provides this distribution and its Laplace transform. Observe that in this Lemma, we first treat $\tau$ as a random variable so that we can obtain the installation time distribution by using a degenerate distribution with all its mass at the value $\tau$.
\begin{lemma}\label{laplaceInstallation}
Let $f(x)$ and $F(x)$ be the PDF and the CDF, respectively, of the component's lifetime. Let $h(y)$ and $H(y)$ be the PDF and the CDF, respectively, of the random preventive age-threshold, which we denote by the random variable $Y$. Let $Z\sim min(X,Y)$, then the PDF and the CDF of $Z$, denoted by $g(z)$ and $G(z)$ respectively, are given by
\begin{align*}
G(z)&=F(z)+H(z)-F(z)H(z), \mbox{ and,} \\
g(z)&=G^{'}(z) =f(z) +h(z) -f(z) H(z) -F(z) h(z).
\end{align*}
If we let $G^{\ast}$ be the Laplace transform of $g$, then for the particular case that the preventive maintenance time is a degenerate distribution with all its mass at the age-threshold $\tau$, then:
\begin{align*}
G^{\ast}(s) = \int _{0}^{\tau}f(x)e^{-sx}dx.
\end{align*}
\end{lemma}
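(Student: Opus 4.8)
The plan is to establish the two claims in sequence: first the CDF and PDF of $Z = \min(X,Y)$ for independent $X$ and $Y$, and then specialize to the degenerate age-threshold to obtain the Laplace transform. The distributional part is a standard order-statistic computation. I would start from the observation that $\{Z > z\} = \{X > z\} \cap \{Y > z\}$, so by independence the survival function factorizes as $\P(Z > z) = (1 - F(z))(1 - H(z))$. Expanding this product and subtracting from one immediately yields $G(z) = F(z) + H(z) - F(z)H(z)$. Differentiating with respect to $z$ via the product rule gives $g(z) = f(z) + h(z) - f(z)H(z) - F(z)h(z)$, matching the stated expression. The only subtlety worth flagging is the independence assumption between the lifetime $X$ and the threshold $Y$, which is implicit in the factorization; this is harmless here since $Y$ will ultimately be deterministic.

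For the Laplace-transform claim, the idea is to take the degenerate limit $H(y) = \mathbf{1}\{y \geq \tau\}$, so that $Y$ places all its mass at $\tau$ and $h$ becomes a Dirac delta at $\tau$. The cleanest route is to compute $G^\ast(s) = \int_0^\infty g(z) e^{-sz} \, \mathrm{d}z$ directly from the density, but one must be careful because in the degenerate case $Z = \min(X, \tau)$ has an atom of mass at $z = \tau$ (with probability $\P(X \geq \tau) = 1 - F(\tau)$) in addition to the absolutely continuous part $f(z)$ on $[0,\tau)$. I would therefore split the Laplace transform into the continuous contribution $\int_0^\tau f(z) e^{-sz} \, \mathrm{d}z$ and the atomic contribution $(1 - F(\tau)) e^{-s\tau}$.

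Here the main obstacle appears: the stated result is $G^\ast(s) = \int_0^\tau f(x) e^{-sx} \, \mathrm{d}x$, which drops the atomic term $(1 - F(\tau)) e^{-s\tau}$. I expect this is deliberate and reflects what is actually needed downstream, namely the Laplace transform of the \emph{corrective}-failure portion of the installation time (failures strictly before $\tau$), rather than the full transform of $Z$ including preventive replacements at exactly $\tau$. The plan is thus to take the formal limit $h(z) \to \delta(z - \tau)$ in the density $g$ and integrate term by term: the terms $h(z)$ and $-f(z)H(z)$ combine in a way that, under the chosen convention for evaluating the product form at the discontinuity, cancels the $z = \tau$ contribution, leaving precisely the integral $\int_0^\tau f(x) e^{-sx} \, \mathrm{d}x$ over the support where $H(z) = 0$. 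I would verify consistency with the mean installation time in \eqref{meanInstall}, since $\tfrac{1}{\mu} = \int_0^\tau (1 - F(x)) \, \mathrm{d}x$ equals $\E[\min(X,\tau)]$, confirming that the degenerate substitution $Y \equiv \tau$ is the correct specialization.

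Finally, I would present the argument so that the degenerate-distribution substitution is transparent: write $H$ as the unit step at $\tau$, note that $H(z) = 0$ for $z < \tau$ so that on $[0,\tau)$ the density reduces to $g(z) = f(z)$, and handle the boundary point $z = \tau$ according to the stated convention. This keeps the derivation short while making explicit exactly where the atomic mass is treated, which is the one place a careless reader might object.
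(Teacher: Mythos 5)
Your first part coincides with the paper's own proof: the survival-function factorization $\P(Z>z)=(1-F(z))(1-H(z))$ under (implicit) independence of $X$ and $Y$, expansion, and differentiation is exactly what the paper does, and your flagging of the independence assumption is a fair point. The gap is in your treatment of the Laplace transform. You correctly observe that in the degenerate case $Z=\min(X,\tau)$ has an atom of mass $1-F(\tau)$ at $\tau$, so the transform of the full (mixed) distribution is $\int_0^\tau f(x)e^{-sx}\,\mathrm{d}x+(1-F(\tau))e^{-s\tau}$. But your proposed route to the stated formula --- that the terms $h(z)$ and $-f(z)H(z)$ ``combine in a way that, under the chosen convention for evaluating the product at the discontinuity, cancels the $z=\tau$ contribution'' --- is a step that cannot be made to work. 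Group the density as $g(z)=f(z)\bigl(1-H(z)\bigr)+h(z)\bigl(1-F(z)\bigr)$: the delta function only ever appears multiplied by $1-F(z)$, and $F$ is \emph{continuous} at $\tau$, so the product $\bigl(1-F(z)\bigr)\delta(z-\tau)$ is unambiguous and equals $\bigl(1-F(\tau)\bigr)\delta(z-\tau)$. Evaluation conventions only matter for products of a jump function with its own delta (such as $H\cdot h$), which never arises here. Hence, if $G^\ast$ denotes the transform of the full distribution of $Z$, the atom term $\bigl(1-F(\tau)\bigr)e^{-s\tau}$ is genuinely present and no limiting argument removes it.

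What the paper actually does, in one line, is compute $G^{\ast}(s)=F^{\ast}(s)-\int_{\tau}^{\infty}f(x)e^{-sx}\,\mathrm{d}x=\int_{0}^{\tau}f(x)e^{-sx}\,\mathrm{d}x$; that is, it silently discards the terms involving $h$ and takes $G^\ast$ to be the transform of the absolutely continuous part $f(z)\mathbf{1}\{z<\tau\}$ only --- a defective density with total mass $G^\ast(0)=F(\tau)<1$. If you want a proof of the lemma as stated, you should say exactly that: interpret $G^\ast$ as the transform of the corrective (failure-before-$\tau$) portion of the installation time, and the formula follows immediately by truncating the integral defining $F^\ast$; do not assert a cancellation in the distributional limit, because there is none. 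Your consistency check via $1/\mu=\int_0^\tau(1-F(x))\,\mathrm{d}x$ is correct but orthogonal: that quantity is the mean of the full mixed distribution, atom included, so it does not corroborate dropping the atom.
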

\begin{proof}
The proof follows from straightforward probability arguments. Observe that
\begin{align*}
1-G(z)   =\P(\min(X,Y)>Z)=\P(X>z)\P(Y>z)&=(1-H(z))(1-F(z))\\
        &=1-F(z)-H(z)+F(z)H(z).
\end{align*}
And for the Laplace transform of $g$, we have $G^{\ast}(s) =F^{\ast}(s) - \int _{\tau}^{\infty}f(x)e^{-sx}dx=\int _{0}^{\tau}f(x)e^{-sx}dx$.
\end{proof}
We now continue with deriving the steady-state probability $P(n\vert S,\tau, K)$ -- the key ingredients for evaluating the objective function of problem \textsc{mipp} -- and divide our analysis in two cases, depending on the value of the repair capacity $K$ compared to the turn-around stock $S$:
\begin{enumerate}
	\item $S\leq K$. In this case, there is ample capacity in the repair shop. Hence, as long as there are $n$ items on stock, the arrival rate at the first queue is exponential with mean $\lambda _{n}=\mu_r (S-n)$.
	\item $S> K$. In this case, the repair shop is truly capacitated: As long as there are $n$ items on stock, the arrival rate at the first queue is exponential with mean  $\lambda _{n}=\mu_r \min(S-n, K)$.
\end{enumerate}
The analysis in both cases is based on the approach of \cite{Kerner2008} and \cite{Economou2015}  and makes use of Lemma \ref{laplaceInstallation}.

\subsubsection{Steady-state probabilities under ample capacity}\label{subsec4.1}
This case corresponds to the situation where the repair shop has ample capacity, which means that the capacity $K$ does not influence the repair time of the defective parts. Let us first introduce a new random variable $R_n$ that represents the distribution of the remaining service time at an arrival instant of a repaired part who finds $n$ parts in the stock. Let $F_{n}^{\ast}(s)$ denote the Laplace transform of $R_n$. Then, by Theorem 3.1 of \cite{Economou2015}, we can compute $F_{n}^{\ast}(s)$ using the following recursive scheme:
\begin{eqnarray}\label{eq1}
F_{n}^{\ast}(s)=\left\{
  \begin{array}{ll}
G^{\ast}(s)&\mbox{if}\;n=0,\\
\frac{\lambda_{n}}{s-\lambda_{n}}\left[G^{\ast}(\lambda_{n})\frac{1-F_{n-1}^{\ast}(s)}{1-F_{n-1}^{\ast}(\lambda_{n})}-G^{\ast}(s))\right]&\mbox{if}\;n \geq 1.\\
\end{array}
\right.
\end{eqnarray}
This recursion initializes with the Laplace transform $G^{\ast}(s)$, which we introduced in Lemma \ref{laplaceInstallation}. Using the scheme in \eqref{eq1} and Corollary 3.1 by  \cite{Economou2015}, the expected remaining service time, denoted by  $R(n)=\mathbb{E}(R_n)$, at an arrival instant of a repaired part who finds $n$ parts in the system, can also be computed recursively as:

\begin{eqnarray}\label{eq2}
R(n)=\left\{
\begin{array}{ll}
\frac{\frac{1}{\mu}}{1-G^{\ast}(\lambda_{n})}-\frac{1}{\lambda_{n}}&\mbox{if}\;n=1,\\
\frac{G^{\ast}(\lambda_{n})R(n-1)}{1-F_{n-1}^{\ast}(\lambda_{n})}-\frac{1}{\lambda_{n}}+\frac{1}{\mu}&\mbox{if}\;n\geq2,\\
\end{array}
\right.
\end{eqnarray}
where  $\lambda _{n}=\mu_r (S-n)$ and  $1/\mu$ is the mean installation time of a component given that the age-threshold is equal to $\tau$, see Equation \eqref{meanInstall}. 

Using Corollary 4 of \cite{Abouee-Mehrizi2016}, we can now express the steady-state probabilities as:
\begin{align}\label{eq4}
&P(n\vert S,\tau, K)=\\
& \left\{
\begin{array}{ll}
\left[1+\left(1+\lambda_{S-1}R(S-1)\right)\frac{\lambda_{0}}{\lambda_{S-1}}\prod_{i=0}^{S-2}\frac{1-F_{i}^{\ast}(\lambda_{i+1})}{G^{\ast}(\lambda_{i+1})}\right.
\left.+\sum_{j=1}^{S-2}\frac{\lambda_{0}}{\lambda_{j}}\prod_{i=0}^{j-1}\frac{1-F_{i}^{\ast}(\lambda_{i+1})}{G^{\ast}(\lambda_{i+1})}\right]^{-1}&\mbox{if}\;n=0,\\
\frac{\lambda_{0} P(0\vert S,\tau, K)}{\lambda_{n}}\prod_{i=0}^{n-1}\frac{1-F_{i}^{\ast}(\lambda_{i+1})}{G^{\ast}(\lambda_{i+1})}&\mbox{if}\leq n\leq S-1,\\
1-\sum_{j=0}^{S-1} P(j\vert S,\tau, K)&\mbox{if}\;n=S.\\
\end{array} \nonumber
\right.
\end{align}
Using \eqref{eq4}, we are now able to efficiently calculate the steady-state probabilities for the case of ample capacity at the repair shop.

\subsubsection{Steady-state probabilities for capacitated systems}
The derivation of the steady-state probabilities for the case that the repair shop is capacitated, i.e. when $S>K$, is slightly different from that where $K\leq S$. In fact, when all repairmen are busy, i.e. when $K$ parts being repaired at the repair shop, the arrival rate at the stock, $\lambda _{n}=\mu_r min(S-n, K)$, becomes constant and is equal to $K\mu_r$. The probabilities we derived in the previous section cannot be applied directly because they involve the Laplace transform of $R_n$ which turns to zero if $\lambda_n$ becomes constant. Fortunately, Theorem 6 by \cite{Oz2017} allows us to find $F_{n}^{\ast}(s)$ even for constant arrival rate. We restate their theorem using our notations. Let $f_{n}(x)=\P(R_n=x)\ (x\geq0)$ denote the probability density function of the random variable $R_n$, \cite{Oz2017} showed that the conditional residual service time  $f_{n}(x)$ satisfies the following recursion:
\begin{eqnarray}\label{eq5}
f_{n}(x)=\left\{
\begin{array}{ll}
g(x)&\mbox{if}\;n=0,\\
 \lambda e^{\lambda x} \int _{x}^{\infty}e^{-\lambda x_{0}}g(x_{0})dx_{0}+\frac{G^{\ast}(\lambda)\left(\lambda e^{\lambda x}\int _{x}^{\infty}e^{-\lambda x_{0}}f_{n-1}(x_{0})dx_{0}\right)}{1-F_{n}^{\ast}(\lambda)}&\mbox{if}\;n\geq 1.\\
\end{array}
\right.
\end{eqnarray}
Using the above recursion at $n=0,...,S-K$ and  $\lambda=\lambda _{n}=K\mu_r$, we can rewrite $f_{n}(x)$ and $F_{n}^{\ast}(s)$ for $n=0,1,$ and $2$ as follows:
\begin{eqnarray}\label{eq6}
\left\{
\begin{array}{l}
f_{0}(x)=g(x),\\
F_{0}^{\ast}(s)=\int_{0}^{\infty}f_{0}(x)e^{-sx}dx=G^{\ast}(s),\\
	f_{1}(x)=\frac{\lambda e^{\lambda x}\int_{x}^{\infty}(e^{-\lambda x_{0}}g(x_{0})dx_{0}}{1-F_{0}^{\ast}(\lambda)},\\
	F_{1}^{\ast}(s)=\frac{\lambda\int_{0}^{\infty}(\int_{x}^{\infty}e^{-sx_{0}}g(x_{0})dx_{0})dx}{1-F_{0}^{\ast}(\lambda)}=\frac{\lambda\int _{0}^{\infty}xe^{-sx}g(x)dx}{1-F_{0}^{\ast}(\lambda)}=\frac{\lambda }{1-F_{0}^{\ast}(\lambda)}\frac{d~F_{0}^{\ast}(s)}{ds},\\
		f_{2}(x)=\lambda e^{\lambda x}\int_{x}^{\infty}e^{-\lambda x_{0}}g(x_{0})dx_{0}+G^{\ast}(\lambda)\frac{\lambda e^{\lambda x}\int_{x}^{\infty}e^{-\lambda x_{1}}f_{1}(x_{1})dx1}{1-F_{1}^{\ast}(\lambda)},\\
	F_{2}^{\ast}(s)=-\lambda\frac{dG^{\ast}(s)}{ds}-\frac{\lambda^{2}G^{\ast}(\lambda)}{(1-F_{1}^{\ast}(\lambda))F_{0}^{\ast}(\lambda) }\frac{d^{2}F_{0}^{\ast}(s)}{2ds^{2}},\\
\end{array} \nonumber
\right.
\end{eqnarray}
and for the general case,  we can write for $n\geq1$:
\begin{eqnarray}
F_{n}^{\ast}(s)=\frac{\lambda}{(1-F_{n-1}^{\ast}(\lambda))}\left[-G^{\ast}(\lambda)\frac{dF_{n-1}^{\ast}(s)}{ds}+\frac{dG^{\ast}(s)}{ds} (F_{n-1}^{\ast}(\lambda)-1)\right],\nonumber
\end{eqnarray}
where,
\begin{eqnarray}
\left\{
\begin{array}{l}
\frac{dF_{n-1}^{\ast}(s)}{ds}=\frac{\lambda}{(1-F_{n-2}^{\ast}(\lambda))}\frac{1}{2}\left[-G^{\ast}(\lambda)\frac{d^{2}F_{n-2}^{\ast} (s)}{ds^{2}}+\frac{d^{2}G^{\ast}(s)}{ds^{2}}(F_{n-2}^{\ast}(\lambda)-1)\right],\\
	\frac{d^{2}F_{n-2}^{\ast}(s)}{ds^{2}}=\frac{\lambda}{(1-F_{n-3}^{\ast}(\lambda))}\frac{1}{3}\left[-G^{\ast}(\lambda)\frac{d^{3}F_{n-3}^{\ast} (s)}{ds^{3}}+\frac{d^{3}G^{\ast}(s)}{ds^{3}}(F_{n-3}^{\ast}(\lambda)-1)\right],\\
	\vdots\\
	\frac{d^{n-1}F_{1}^{\ast}(s)}{ds^{n-1}}=\frac{\lambda}{(1-F_{0}^{\ast}(\lambda))}\frac{1}{n}\left[-G^{\ast}(\lambda)\frac{d^{n}F_{0}^{\ast}( \lambda)}{ds^{n}}+\frac{d^{n}G^{\ast}(s)}{ds^{n}}(F_{0}^{\ast}(\lambda)-1)\right].\\
\end{array}
\right.\nonumber
\end{eqnarray}
In summary, for a repair shop  with limited capacity $K<S$, the Laplace transform of the random variable that represents the distribution of the remaining service time at an arrival instant of a customer who finds $n$ customers in the system, $R_n$ can be written as:
\begin{eqnarray}\label{eq10}
F_{n}^{\ast}(s)=\left\{
\begin{array}{ll}
G^{\ast}(s)&\mbox{if}\;n=0,\\
\frac{\lambda}{(1-F_{n-1}^{\ast}(\lambda))}(-G^{\ast}(\lambda)\frac{dF_{n-1}^{\ast}(s)}{ds}+\frac{dG^{\ast}(s)}{ds}(F_{n-1}^{\ast}(\lambda) -1)&\mbox{if}\;s=\lambda_{n},\\
\frac{\lambda_{n}}{s-\lambda_{n}}(G(\lambda_{n})\frac{1-F_{n-1}(s)}{1-F_{n-1}(\lambda_{n})}-G(s))&\mbox{if}\;s\neq\lambda_{n},\\
\end{array}
\right.
\end{eqnarray}
and the steady-state probability is again given by Equation \eqref{eq4}, where $F_{n}^{\ast}(s)$ is now given by \eqref{eq10}.

Now that we have derived expressions for computing the steady-state probabilities, we turn our attention back again to the objective function of problem \textsc{mipp}. This is the aim of next section.

\subsection{Cost function properties and optimizing algorithms}
In this section we investigate the objective function of problem \textsc{mipp} in more detail. Recall that this objective function is given by 
$$\quad  \min_{\{S,\tau,K\}}   C_u(1 - P(0\vert S,\tau, K) \mu \P(X<\tau) + C_d P(0 \vert S, \tau, K) + C_a S + C_w K,$$
with  $\{S,K\}\in\N$ and $\tau\in\R^+$. Unfortunately, we are unable to show that this function is convex on one of these parameters, namely $S$, $\tau$ or $K$, which would have eased the procedure of finding the optimal values significantly. Fortunately, however, we are able to demonstrate other desireable properties concerning the monotonicity of the probability of being in the state zero, i.e. $n=0$. Specifically, as the following Lemma illustrates, we have that $P(0\vert S,\tau, K)$ is decreasing (in the weak sense) in $S$ and $K$.
\begin{lemma}\label{monotoneProb}
The following two statements hold:\\
(i) $P(0\vert S,\tau, K)$ is non-increasing in $S$.\\
(ii) $P(0\vert S,\tau, K)$ is non-increasing in $K$.
\end{lemma}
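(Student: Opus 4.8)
The plan is to reduce both statements to a single monotonicity property of the cyclic tandem network of Figure~\ref{fig:model1}: that its steady-state cycle throughput is non-decreasing in the relevant parameter. The bridge is a rate-conservation identity for $P(0\vert S,\tau,K)$. The capital good operates one component at a time with mean installation time $1/\mu=\int_0^\tau(1-F(x))\,\mathrm{d}x$, which depends only on $\tau$; hence the long-run rate at which components complete installation and are shipped to the repair shop equals $\mu\,(1-P(0\vert S,\tau,K))$, the service rate times the fraction of time the capital good is up. In the closed network this departure rate is precisely the cycle throughput $\theta(S,\tau,K)$, so $P(0\vert S,\tau,K)=1-\theta(S,\tau,K)/\mu$. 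Since $\mu$ is unchanged when we vary $S$ (holding $\tau,K$ fixed) or $K$ (holding $S,\tau$ fixed), it suffices to prove that $\theta$ is non-decreasing in $S$ and in $K$.

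I would then establish both throughput inequalities by a sample-path coupling. For (i), I couple the $S$-part and $(S+1)$-part networks on a common probability space, feeding both the same i.i.d.\ sequence of installation times at the capital good and the same exponential repair clocks at the repair stations, and maintain the invariant that the occupancy of the capital-good station in the $(S+1)$-system dominates that in the $S$-system at all times. Under this domination the capital-good server in the larger system is busy whenever it is busy in the smaller one, so it completes installations at least as often, giving $\theta(S+1,\tau,K)\ge\theta(S,\tau,K)$. For (ii) the argument is analogous: increasing $K$ to $K+1$ can only accelerate repair completions, so coupling the two repair shops so that the extra server never slows a repair keeps the capital-good occupancy pathwise at least as large, again yielding a throughput non-decreasing in $K$. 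Equivalently, in the $M_n/GI/1$ representation both moves raise the state-dependent arrival rates $\lambda_n=\mu_r\min(S-n,K)$ pointwise (and, for (i), also enlarge the capacity), and the coupling shows the occupancy process is pathwise larger, so the probability $P(0\vert\cdot)$ of the empty state can only decrease.

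The main obstacle is that the installation time is generally distributed and hence not memoryless, which prevents a naive synchronization of service completions: when the capital-good server is busy in one coupled system but idle in the other, or when the two servers began their current installations at different epochs, their residual installation times differ and completions cannot be matched one-for-one, so the occupancy invariant is not obviously preserved at a service-completion event. I would resolve this by first proving the monotonicity for phase-type installation-time distributions, for which the whole system is a finite continuous-time Markov chain (state: the capital-good occupancy together with the phase of the installation in progress, the repair-shop occupancy being fixed by conservation of the $S$ parts); for such chains the domination follows from standard stochastic-comparison arguments, since enlarging $S$ or $K$ yields a suitably larger transition structure. I would then extend to a general lifetime distribution $F$, hence general $G=\min(X,\tau)$ including its atom at $\tau$, by approximating $G$ weakly by phase-type distributions and invoking continuity of $P(0\vert S,\tau,K)$ in the installation-time distribution, which is transparent from the closed-form expression \eqref{eq4} since $G^{\ast}$ and the $F_n^{\ast}$ depend continuously on $G$. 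Being preserved under these limits, the inequalities then hold for the general model.
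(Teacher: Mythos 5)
Your first step is exactly the paper's: by flow balance at the capital-good node, the cycle throughput equals $\mu\,(1-P(0\vert S,\tau,K))$ with $\mu$ depending only on $\tau$, so both assertions reduce to the throughput being non-decreasing in $S$ and in $K$. Where you part ways with the paper is in how that throughput monotonicity is established. The paper does not prove it from scratch; it invokes ready-made monotonicity theorems for closed queueing networks -- Theorem 1(iii) and Theorem 3 of \cite{Shanthikumar1989} -- which give, respectively, monotonicity of the per-node output rate in the population size and in the number of servers at a node. You instead attempt a self-contained sample-path proof of these facts, and that attempt has a genuine gap.

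The gap is the step you label ``standard stochastic-comparison arguments'' for the phase-type chain. Passing to phase-type installation times does not dissolve the synchronization problem you correctly identified for general $G$; it only re-expresses it. In the Markov chain whose state is (occupancy of the capital-good station, phase of the installation in progress), the two coupled systems inevitably de-synchronize -- idle spells and repair completions that occur in only one of the two systems push their phase processes apart -- and once the ``larger'' system's installation sits in a later phase than the ``smaller'' system's, its next completion can drop its occupancy strictly below the smaller system's, destroying the claimed invariant. Under the natural componentwise order on (occupancy, phase) the generators do not satisfy the standard monotone-comparison criteria (Massey/Whitt-type conditions), so no off-the-shelf theorem applies; one would need either a carefully designed order on phases together with a verification that every transition type preserves it (including transitions firing in only one system), or an induction of the Shanthikumar--Yao type -- i.e., essentially a reproof of the theorems the paper cites. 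That is the crux of the lemma, and in your proposal it is asserted rather than proven. Your final limiting step (phase-type approximation of $G$, which has an atom at $\tau$, plus continuity of \eqref{eq4} in $G^{\ast}$ and the $F_{n}^{\ast}$) is fine in principle, modulo checking that the denominators $G^{\ast}(\lambda_{i+1})$ and $1-F_{i}^{\ast}(\lambda_{i+1})$ stay bounded away from zero along the approximating sequence, but it rests entirely on the unproven comparison.
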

\begin{proof}
We first rewrite $P(0\vert S,\tau, K)$.
From Equation 3.23 of \cite{Economou2015} and Equation 1 in \cite{Ross2006}, we know that for all $n\geq2$
\begin{eqnarray}\nonumber
\sum_{j=n}^{S} P(j\vert S,\tau, K)= P(n-1\vert S,\tau, K)\lambda_{n-1}R(n-1)+\mu\sum_{j=n}^{S}\lambda_{j} P(j\vert S,\tau, K).
\end{eqnarray}
For $n=2$, we know that $\sum_{j=1}^{S} P(j\vert S,\tau, K)=1- P(0\vert S,\tau, K)- P(1\vert S,\tau, K)$, and also that $\lambda_{S}=0$ for $n=S$. Hence,
\begin{align}\nonumber
\sum_{j=1}^{S} P(j\vert S,\tau, K)=1- P(0\vert S,\tau, K)- P(1\vert S,\tau, K)= P(1\vert S,\tau, K)\lambda_{1}R(1)+\frac{1}{\mu}\sum_{j=1}^{S-1}\lambda_{j} P(j\vert S,\tau, K).
\end{align}
Using the expression of $R(1)$ and $ P(1\vert S,\tau, K)$ from Equation \eqref{eq2} and \eqref{eq4} yields
\begin{eqnarray}\nonumber
 P(0\vert S,\tau, K)=
\left[1+\frac{\lambda_{0}}{\mu G^{\ast}(\lambda_{1})}+\frac{\lambda_{0}}{\mu}\sum_{n=2}^{S-1}\prod_{i=0}^{n-1}\frac{1-F_{i}^{\ast}(\lambda_{i+1})}{G^{\ast}(\lambda_{i+1})}\right]^{-1}.
\end{eqnarray}
Consider now two node cyclic networks as represented in Figure \ref{fig:model1}: One with $S$ and one with $S+1$ customers. All other parameters are the same for both networks. By Theorem 1 (iii) of \cite{Shanthikumar1989}  we know that the average output rate at node $i, i=1,2$ of network one with a population of $S$ customers is less than the average output rate at of the same node of network two with a population of $S+1$. We can rewrite this result as
\begin{eqnarray}\nonumber
\sum_{j=0}^{S-1}\lambda_{j}P(j\vert S,\tau, K)=\mu(1-P(0\vert S,\tau, K)\leq \sum_{j=0}^{S}\lambda_{j}P(j\vert S+1,\tau, K)=\mu(1-P(0\vert S+1,\tau, K)),
\end{eqnarray}
which proves Assertion (i). Similarly, to prove Assertion (ii), we consider two  node cyclic networks: One with server capacity $K$ and one with server capacity $K+1$, respectively, at node 2. Then, by Theorem 3 of \cite{Shanthikumar1989}, we have:
\begin{eqnarray}\nonumber
\mu(1-P(0\vert S,\tau, K)\leq \mu(1-P(0\vert S,\tau, K+1)),
\end{eqnarray}
which completes the proof of Assertion (ii).
\end{proof}
Lemma \ref{monotoneProb} is not only intuitive -- indeed, increasing repair capacity and/or the turn-around stock will not lower the probability of downtime -- it also helps us in designing algorithms to compute optimal values of $K$ and $S$ for a fixed choice of $\tau$. Specifically, from Equation (1) and Lemma \ref{monotoneProb}, it is clear that $C_u(1 - P(0\vert S,\tau, K) + C_a S$ is increasing in $S$. Moreover, this expression tends to infinity as $S$ tends to infinity. Based on these observations, we propose our first algorithm, Algorithm \ref{Algo0}, which enables to find the optimal $S^*$ (for a fixed value of $K$ and $\tau$).

Based on Algorithm \ref{Algo0}, we propose Algorithm \ref{Algo1} that makes use of Algorithm \ref{Algo0}, that is guaranteed to find the optimal value of $K^*$ and $S^*$ for a fixed value of $\tau$. Indeed, according to Lemma 1, the steady-state probability $P(0\vert S,\tau, K)$  is decreasing as $K$ increases. Therefore, we can use the Algorithm \ref{Algo0} recursively to find the optimal value of $K^*$ for a given $S$ (and hence also for $S^*$). 

Observe that in both algorithms, we repeatedly need to compute the steady-state probabilities for fixed choices of $S$, $K$, and $\tau$. For these computations, we make use of the closed-form formulas developed in Section \ref{sec4}.

\begin{algorithm}[H]\label{Algo0}
\setstretch{1.1}
\footnotesize
\SetAlgoLined
\KwResult{Optimal $S^*$}
 Step 1: Fix $S:=0, S^{*}:=0 $ and $TC_{min}=\infty$\\
 Step 2: Compute $TC=C_u(1 - P(0\vert S,\tau, K) \mu \P(X<\tau) + C_d P(0 \vert S, \tau, K) + C_a S + C_w K.$\\
 \If{$TC<TC_{min}$}{
  $TC_{min}=TC$\;
   $S:=S+1$\;
   Go to step 3\;
   }
   Step 3: Compute $C_u(1 - P(0\vert S,\tau, K) \mu \P(X<\tau)  + C_a S + C_w K$\\
    \eIf{$C_u(1 - P(0\vert S,\tau, K) \mu \P(X<\tau)  + C_a S + C_w K<TC_{min}$}{
   Go to Step 2\;
   }{Stop.}
   \caption{Finding the optimal stock level $S^*$ for a given number of repairmen $K$.}
\end{algorithm}

\begin{algorithm}[H]\label{Algo1}
\setstretch{1.1}
\footnotesize
\SetAlgoLined
\KwResult{Optimal $S^*$ and $K^*$}
 Step 1: Fix $K:=1$ use Algorithm \ref{Algo0}  to find $S^*$ and its corresponding  optimal total cost $TC(S^*,1,\tau)$. Set $TC_{min}=TC$\\
 Step 2: Set $K:=K+1$, use Algorithm \ref{Algo0} and find the new $S^*$ and it corresponding optimal total cost $TC(S^*,K,\tau)$.\\
 \If{$TC<TC_{min}$}{
  $TC_{min}=TC$\;
   Go to step 3\;
   }
   Step 3: Compute $C_u(1 - P(0\vert S^*,\tau, K) \mu \P(X<\tau)  + C_a S^* + C_w K$\\
    \eIf{$C_u(1 - P(0\vert S^*,\tau, K) \mu \P(X<\tau)  + C_a S^* + C_w K<TC_{min}$}{
   Go to Step 2\;
   }{Stop.}
   \caption{Finding the optimal stock level $S^*$ and the optimal capacity $K^*$ .}
\end{algorithm}

\section{Extension to multiple capital goods} \label{multipleCapital}
In this section, we use our exact analysis for the single capital good, as described in the previous sections, to design an approximation for the case where we have several capital goods.

We now consider a network in which each capital good is characterized by different failure rates and age-thresholds, but where they share a common repair shop with limited repairman resources. This network is shown in Figure \ref{fig:model2}. Because the repair shop resources are shared among the capital goods, we need to decide upon the repair prioritization policy in the repair shop. We consider both a first-come, first-served (FCFS) discipline, where parts are repaired in the order in which they arrive at the repair shop, and a priority scheduling policy with priority classes. When considering a priority scheduling policy, the case where the number of repairman is greater than one is unfortunately not tractable to analyse. In that setting, the steady-states probabilities for each class of capital goods cannot be computed due to the curse of dimensionality caused by multiple classes and multiple servers. Therefore, we only treat the case with priorities under the assumption of a single repair resource at the repair shop, while for the FCFS discipline, we also treat the case with multiple repair resources.  

In summary, we study two systems:
\begin{enumerate}
\item A system for which the repair shop is an $\bullet/M/K$ queue with FCFS discipline.
\item A system for which the repair shop is an $\bullet/M/1$ with priorities between classes.
\end{enumerate} 

 \begin{figure}[ht!]
% \centering\includegraphics[width=13cm, height=7cm]{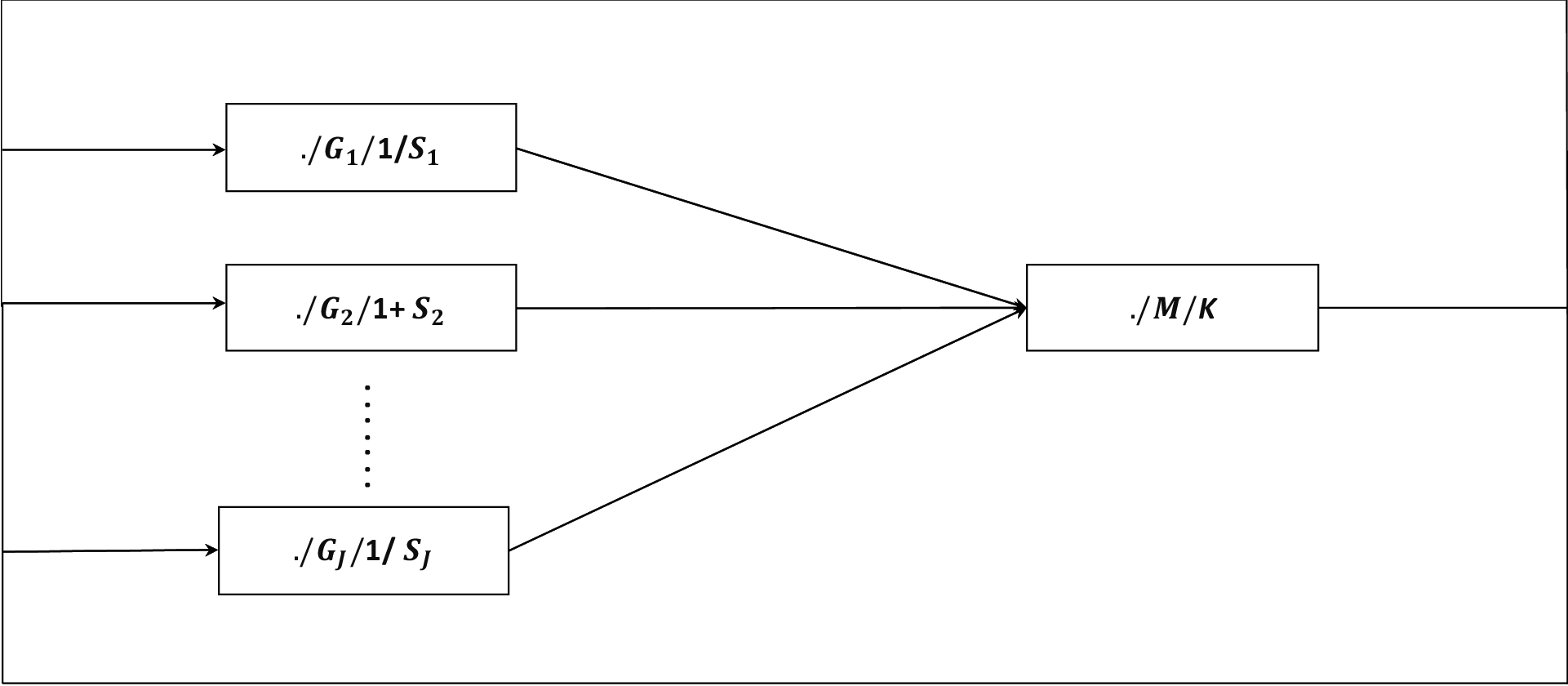}
\centering
\scalebox{0.7}{%
\begin{tikzpicture}
     \node[rectangle,draw](a) at (0,4)[draw,line width = 0.3mm,minimum width=1.2cm,minimum height=1.2cm] {$\bullet/GI/1/S_1$};
 \node[rectangle,draw](b) at (0,2) [draw,line width = 0.3mm,minimum width=1.2cm,minimum height=1.2cm] {$\bullet/GI/1/S_2$};
  \node (e) at (0,1) {$\vdots$};
 \node[rectangle,draw] (c) at (0,-0.2)[draw,line width = 0.3mm,minimum width=1.2cm,minimum height=1.2cm] {$\bullet/GI/1/S_n$};
  \node[rectangle,draw] (d) at (8,2)[draw,line width = 0.3mm,minimum width=1.2cm,minimum height=1.2cm] {$\bullet/M/K$};
  
 \draw[-Latex] (a) -- (d);
 \draw[-Latex] (b) -- (d);
\draw[-Latex] (c) -- (d);
\draw[-] (10,-1.25)-- (-2,-1.25)-- (-2,5)-- (10,5)-- (10,-1.25);
 %\draw[->] (10,5)-- (-2,5)-- (-2,5)--(-1,5);
 %\draw[-] (-2,0) -- (-2,5);
  \draw[-] (d) -- (10,2);
\draw[-Latex] (-2,-0.2) -- (c);
\draw[-Latex] (-2,4) -- (a);
\draw[-Latex] (-2,2) -- (b);
 \end{tikzpicture}}
     \caption{Cyclic maintenance network with multiple capital goods}
    \label{fig:model2}
\end{figure}

The networks we consider do not have a steady-state probability that can be written as product form of state probabilities of each node, so we use the approximation method of \cite{Marie1979} to obtain these probabilities. The method developed by \cite{Marie1979} and extended by \cite{Bayna1996} consists of replacing general service time queues with state-dependent exponential service time queues.  The exponential service time is obtained by analysing each station with a general service time in isolation but with a state-dependent arrival rate.  The method is iterative and appears to be a good technique for providing different performance measures (indeed, we later compare this method with a simulation which shows that the method performs very well). \cite{Bayna1996} extended Marie's work to multiple classes by simply decomposing the original network into multiple networks, each of which handles only one class. The dependency between classes is taken into account when analysing isolated multi-class stations. 

\subsection{Repairing under a first-come, first-served discipline}
We first replace for each capital good $i$ the $\bullet/G_i/1/S_i$ by an $\bullet/M(n)/1/S_i$ queue with state dependent service rate $\mu_{i}(n), n=1,...,S_i$, and the repair shop $\bullet/M/K$ queue by $\bullet/M_n/K$ node node with state dependent service rate $\mu_{i,0}(n), n=1,...,S_i$. Our network associated with capital good $i$ is now a two nodes Jackson network as shown in Figure \ref{fig:subnetwork}. This network does admit a product stationary probability, which can be written as follows:
\begin{align}\label{prodnetwork0}
\tilde{P}_{i}(n)=T_i \left (\prod _{k=1}^n \frac{1}{\mu_{i}(k)} \right )\prod _{k=1}^{S_i-n} \frac{1}{\mu_{i,0}(k)}, i=1,...,J;n=0,...,S_i
\end{align}
where, $T_i$ is the normalising constant of the node of capital good $i$, $n$ are the number of items in the queue $\bullet/G_i/1/S_i$ and $S_i-n$ are the number of failed parts at the repair shop node. Observe that this sub-network in isolation is similar to the model we analysed in Section \ref{sec4}. 

Since capital good $i,i=1,...,J$, can only have capital good $i$ items, the marginal steady-state probability to have $n$ items at capital good $i$ is $\tilde{P}_{i}(n)$. For the repair shop node, the marginal steady-state probability to have $n$ items of class $i$ is then simply $\tilde{P}_{i}(S_i-n)$.

\begin{figure}[ht!] \centering
\fontsize{8pt}{10pt}\selectfont
\begin{tikzpicture}[
    triangle/.style = {draw, line width = 0.3mm, regular polygon, regular polygon sides=3, minimum size=2cm},
    node rotated/.style = {rotate=180},
    border rotated/.style = {shape border rotate=180},
  ]
	\node (rect2) at (2,1.5) [draw,line width = 0.3mm,minimum width=1.3cm,minimum height=1.1cm]  {$\bullet/M_n/1/S_i$};
	\node (rect1) at (5,1.5) [draw,line width = 0.3mm,minimum width=1.3cm,minimum height=1.1cm] {$\bullet/M_n/K$};
	\node [text width=3cm, align=center, inner sep=0pt] at (2,0.5) {Capital good};
	\node [text width=3cm, align=center, inner sep=0pt] at (5,0.5) {Repair shop};
	\draw [-Latex, line width = 0.3mm] (0,1.5) -- (rect2.west);
	\draw [-Latex, line width = 0.3mm] (rect2.east) -- (rect1.west);
	\filldraw [ line width = 0.3mm] (rect1.east) -- (7,1.5);
	\draw [ line width = 0.3mm] (7,1.5) to (7,0);
	\draw [ line width = 0.3mm] (7,0) -- (0,0);
	\draw [ line width = 0.3mm] (0,0) -- (0,1.5);
\end{tikzpicture}
     \caption{Sub-network for capital good $i$.}
    \label{fig:subnetwork}
\end{figure}
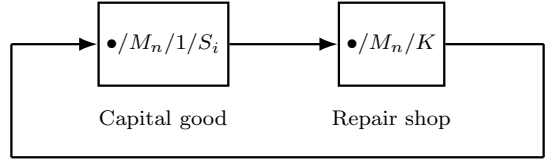

For this Jackson network, we can readily obtain the state dependent arrival rate $\lambda_{i}(n)$ at each node $i, i=1,...,J$ and the the state dependent arrival rate at the repair shop (node 0) $\lambda_{0,i}(n)$ for each class $i, i=1,...,J,$ using the following equations:

\begin{align}\label{arrivalrate}
\lambda_{i}(n) &=\mu_{i}(n+1)\frac{\tilde{P}_{i}(n+1)}{\tilde{P}_{i}(n)},\text{ and, }\lambda_{0,i}(n) &=\mu_{0,i}(n+1)\frac{\tilde{P}_{i}(S_i-n-1)}{\tilde{P}_{i}(S_i-n)}, i=1,...,J.
\end{align}
However, the scheme above in \eqref{arrivalrate} can only be used if the service rates $\mu_{i}(n)$ and $\mu_{0,i}(n)$, $n=1,...,S_i$ are known.  It turns out that these service rate can be obtained by analysing each node in isolation for a given values of $\lambda_{i}(n)$ and $\lambda_{0,i}(n)$, for which we have derived exact expressions in Section \ref{sec4}.

Specifically, for nodes $i,i=1,...,J$, the steady-state probabilities, which we now index by $i$,  can be used as a way to approximate the state dependent service rate at node $i$. The principle of this approximation is to set the arrival rate equal to $\lambda_{i}(n), n=0,...,S_i$ for node $i,i=1,...,J$. Then by Equation \eqref{eq4}, we can approximate the conditional throughput (average output rate at node $i$), denoted by $\upsilon_i$  as
\begin{align*}
\upsilon_i(n)=\lambda_{i}(n-1)\frac{P_{i}(n-1)}{P_{i}(n)}, n=1,...,S_i,
\end{align*}
Now, the approximate state dependent service rate at node $i$ is set to
\begin{align}\label{service-rate-i}
\mu_i(n)=\upsilon_i(n), n=1,...,S_i
\end{align}

We now need to find the state dependent service rate $\mu_{0,i}(n)$ at the repair shop (node 0). To do so, we again assume that we know the state dependent arrival rates $\lambda_{0,i}(n)$. We can then obtain $\mu_{0,i}(n)$ in a similar way as for nodes $1,...,J$. 

Since there are $J$ classes of items to be repaired at node 0, the repair shop can be viewed as a multi-class queueing system. If it has $K$ repairman, then the steady-state probability of this queueing system is given by:
\begin{align}\label{eq25}
P_0(n_1,...,n_J)=\begin{cases}\frac{1}{T_0}(1/\mu_r)^{\sum_{i=1}^{J}n_i}\prod_{i=1}^{J}\frac{\prod_{k=0}^{n_i-1}\lambda_{0,i}(k)}{n_i!}, & \sum_{i=1}^{J}n_i \leq K,\\
\frac{1}{T_0}\frac{\left(\sum_{i=1}^{J}n_i\right)!(1/\mu_r)^{\sum_{i=1}^{J}n_i}}{K!K^{\left(\sum_{i=1}^{J}n_i\right)-K}}\prod_{i=1}^{J}\frac{\prod_{k=0}^{n_i-1}\lambda_{0,i}(k)}{n_i!}, &\sum_{i=1}^{J}n_i > K,\end{cases}
\end{align}
where $T_0$ is the normalising constant and $n_i, i=1,...,J,$ are the number of failed parts of capital good $i$ at the repair shop.

The marginal steady-state probability of having $n_i, i=1,...,J,$ parts at the repair shop can be expressed as
\begin{align}\label{eq26}
P_0(n_i)=\sum_{n_j\leq S_j, j\neq i}P_0(n_1,...,n_J), n_i=0,...,S_i.
\end{align}
From the above equation, the conditional throughput of class $i$ at the repair shop node is 
\begin{align*}
\upsilon_{0,i}(n)=\lambda_{0,i}(n_i-1)\frac{P_{0}(n_i-1)}{P_{0}(n_i)}, n_i=1,...,S_i,
\end{align*}
and the approximate state dependent service rate of class $i$ at node $0$ is set as
\begin{align}\label{eq27}
\mu_{0,i}(n)=\upsilon_{0,i}(n), n=1,...,S_i.
\end{align}

We are now ready to provide an algorithm, Algorithm \ref{algo3}, to compute the steady-state probabilities of the original network of Figure \ref{fig:model2} and provide an approximate solution for finding the best $S_i^*, i=1,...J.$ that minimizes the total cost. This algorithm makes use of Algorithm \ref{Algo1}, we developed in Section \ref{sec4}.

\begin{algorithm}[H]\label{algo3}
\setstretch{1.1}
\SetAlgoLined
\footnotesize
\KwResult{Optimal $S_i^*$}
 Step 1:
 \For{i=1:J}{
  Set $\mu_{0,i}(n_i)=n_i\mu_r$ and $\mu_{i}(n)=\mu_{i}$\;This is an initialisation for service rate at each node, $\mu_{i}$ is given by Equation \eqref{meanInstall}\;
  Set $v=1$ and $\mu_{0,i}^v(n_i)=\mu_{0,i}(n_i)$ , $\mu_{i}^v(n)=\mu_{i}(n)$ and fix some tolerance $\epsilon$;}
   Step 2:
 \For{i=1:J}{
  Compute $\tilde{P}_{i}(n), n=0,...,S_i$ by Equation \eqref{prodnetwork0} and using $\mu_{0,i}^r(n_i)$ and $\mu_{i}^r(n)$\;
  Compute $\lambda_{0,i}(n_i), n_i=0,...,S_i-1$ and $\lambda_{i}(n), n=0,...,S_i-1$ using Equation \eqref{arrivalrate};}
 Step 3:
 \For{i=1:J}{
  Compute $P_{i}(n), n=0,...,S_i$ using Equation (\ref{eq4})\;
  Compute $P_{0}(n_i), n_i=0,...,S_i$ using Equations (\ref{eq25}) and (\ref{eq26}) \;
  Compute $\mu_{0,i}(n_i), n_i=0,...,S_i-1$ and $\mu_{i}(n), n=0,...,S_i-1$ using Equations \eqref{service-rate-i} and \eqref{eq27}\;
  Set $v=v+1$ $\mu_{0,i}^v(n_i)=\mu_{0,i}(n_i)$ and $\mu_{i}^v(n)=\mu_{i}(n)$}
 \eIf{$\mu_{0,i}^v(n_i)-\mu_{0,i}^{v-1}(n_i)> \epsilon$}{
  Go to step 2\;}{
  Use Algorithm \ref{Algo1} to find the optimal $S_i^*, i=1,...,J$.}
   \caption{Finding the optimal $S_i^*, i=1,...,J,$ for a given number of repairman $K$.}
\end{algorithm}

%********* Here I cannot go further, since queue with priority and state dependent arrival rates are difficult to analyse. I did not find any work that deals with state dependent arrival M/M/1 queue and multiple priorities.**********
%In case of non state dependent arrival rates, we use the work of \cite{Sleptchenko2015}.******

\subsection{Repairing under a priority discipline}
We now treat the case where the repair station is an $\bullet/M/1$ queue with multiple priorities. We consider the case where the station station a single server (repairman) and that the failed parts are ranked in their order of priority. We assume that class 1 has the highest priority and class $J$ has the lowest priority. When a failed component of type $i<j$ arrives at the repair station and the part $j$ is being repaired, the server immediately stops repairing the component $j$ and starts with part $i$. The part $j$ resumes the repair at the point where it stopped if no part of a higher class is being repaired or in the queue. That is, we assume a preemptive priority rule.

A queue with multiple priorities and a preemptive service discipline has been analysed very well in the literature. The majority of works approximate the stationary probability or provides some performance measures without computing the steady-state probabilities (e.g. mean response time). We refer the reader to the work of \cite{wu2014classification}, where the most important works have been largely summarized under the assumption that the arrival rate is independent of the state of the system. 

For state-dependent arrival rates, little work has been done. The most recent is that of \cite{Brandwajn2017}, who developed an algorithmic procedure to approximate the stationary probability of each priority class. Only the work by \cite{Bitran2002} gives the exact calculation of the stationary probabilities with two priority classes.  We summarize their procedure in the following Lemma. 
\begin{lemma}\label{lemma3}\citep{Bitran2002},
For a two class $M_n/M/1$ queue with state dependent arrival rates, the steady-state probabilities are:\\
\begin{align*}
P_1(n)=\begin{cases}\left[\sum_{n=1}^{S_1}\left((1/\mu_r)^k\prod_{k=0}^{n-1}\lambda_1(k) \right)\right]^{-1} & n = 0\\P_1(0)(1/\mu_r)^k\prod_{k=0}^{n-1}\lambda_1(k) & n=1,...,S_1\end{cases} 
\end{align*}
and
\begin{align*}
P_2(n)=\frac{e'C_k}{\sum_{n=0}^{S_2}e'C_k}
\end{align*}
where:\\
 $C_0$ is the right eigenvector of $B_0$ associated to the eigenvalue 0,\\
$ B_k=A_k-\lambda_2(k)e_1e',k=0,...,S_2$\\
$e'$ is the transpose of a column matrix with ones elements and size $S_1+1$,\\
$e_1$ is a column matrix of size $S_1+1$ with one in its first element and zeros otherwise,\\
$ B_k=A_k-\lambda_2(k)e_1e',k=0,...,S_2$\\
\begin{align*}
A_k=\begin{pmatrix}
a_{0,k} & -\mu_r &0& \cdots &0 \\
-\lambda_{1}(0) & a_{1,k} &-\mu_r&0 \cdots & 0 \\
  & \ddots  & \ddots & \ddots  \\
& & -\lambda_{1}(S_1-2) & a_{S_1-1,k} &-\mu_r\\
0&\cdots &\cdots& -\lambda_{1}(S_1-1) & a_{S_1,k} \\
\end{pmatrix}
\end{align*}

\begin{align*}
a_{i,j}=\begin{cases}\lambda_1(0)+\lambda_2(0) & i=j= 0\\
\lambda_1(0)+\lambda_2(j) +\mu_r& i,j> 0\\
\lambda_2(j) +\mu_r& i=S_1,j\geq  0\\
\lambda_1(i)+\lambda_2(j) +\mu_r& \\
\end{cases}
\end{align*}
and $C_k=\lambda_2(k-1)B_k^{-1}C_{k-1}, k=1,...,S_2$
\end{lemma}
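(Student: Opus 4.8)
The plan is to analyze the underlying two-dimensional continuous-time Markov chain directly and to exploit the decoupling that the preemptive priority rule creates between the two classes. Let the state be $(n_1,n_2)$ with $n_1\in\{0,\dots,S_1\}$ class-1 parts and $n_2\in\{0,\dots,S_2\}$ class-2 parts present at the single repairman. The transition rates are: class-$i$ arrivals at rate $\lambda_i(n_i)$ (raising $n_i$ by one), class-1 completions at rate $\mu_r$ whenever $n_1\geq1$, and, crucially, class-2 completions at rate $\mu_r$ \emph{only} when $n_1=0$, since under preemptive priority the server attends a class-2 part if and only if no class-1 part is present. This last feature is the source of all the structure in the lemma.

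First I would obtain the class-1 marginal almost for free. Since class-1 arrivals and completions never depend on $n_2$, the projected process $\{n_1(t)\}$ is in its own right a birth--death chain on $\{0,\dots,S_1\}$ with birth rates $\lambda_1(k)$ and constant death rate $\mu_r$. Its stationary law is the standard product form $P_1(n)=P_1(0)\prod_{k=0}^{n-1}\lambda_1(k)/\mu_r$, with $P_1(0)$ determined by normalization over $n=0,\dots,S_1$. This is exactly the first displayed expression, so that half of the claim is immediate.

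The substance is the class-2 marginal. Here I would group states by their class-2 level $k=n_2$ and collect the stationary probabilities of the $S_1+1$ states at that level into the (unnormalized) column vector $C_k$. The global balance equations then assemble into a block-tridiagonal system in $k$: within a level the class-1 transitions furnish the tridiagonal matrix $A_k$, whose diagonal entries $a_{i,k}$ aggregate the total outflow rate from each state (including the competing $\lambda_2(k)$ for an upward jump) and whose sub/super-diagonals carry the class-1 birth/death rates; the upward coupling to level $k+1$ is scaled by $\lambda_2(k)$ and acts on every $n_1$, whereas the downward coupling to level $k-1$ flows through the $n_1=0$ state alone. That rank-one downward channel is precisely what the correction $\lambda_2(k)\,e_1e'$ in $B_k=A_k-\lambda_2(k)e_1e'$ records. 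Performing forward elimination on this block-tridiagonal system (equivalently, folding the balance at each level into the next) collapses it to the single forward recursion $C_k=\lambda_2(k-1)B_k^{-1}C_{k-1}$, while the boundary level $k=0$ carries a homogeneous balance equation that forces $C_0$ into the null space of $B_0$, i.e.\ to be the eigenvector of $B_0$ for eigenvalue $0$. Summing each level with $e'C_k$ yields the level probabilities, and dividing by $\sum_k e'C_k$ gives the marginal $P_2(n)=e'C_k/\sum_{n}e'C_k$.

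The main obstacle I anticipate is the careful bookkeeping that produces the block structure correctly. One must verify that the downward class-2 flux enters only through the first ($n_1=0$) coordinate, thereby justifying the rank-one form of the correction, and that the diagonal entries $a_{i,k}$ correctly enumerate all competing rates in the three boundary regimes $n_1=0$, $0<n_1<S_1$, and $n_1=S_1$. One must then confirm that the forward elimination genuinely telescopes into the clean single-step recursion for $C_k$ rather than a two-sided relation, and handle the top boundary at $k=S_2$ consistently. Existence and uniqueness of the null vector $C_0$, together with invertibility of $B_k$ for $k\geq1$, follow from irreducibility and finiteness of the chain, but these points should be stated explicitly to close the argument.
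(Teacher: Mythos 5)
Your derivation is correct in substance, but be aware that the paper contains no proof of this lemma at all: it is presented explicitly as a summary of the procedure in Bitran and Caldentey (2002), with the citation standing in for the argument. Your proposal therefore supplies a self-contained derivation where the paper defers to the literature, and it reconstructs the right structure: preemptive priority makes the $n_1$-dynamics autonomous, so the class-1 marginal is a plain birth--death chain, and grouping states by the class-2 level yields a block-tridiagonal system whose downward coupling enters only through the $n_1=0$ coordinate. One refinement to your ``forward elimination'' step: the cleanest way to collapse the three-term relation $A_kC_k=\lambda_2(k-1)C_{k-1}+\mu_r e_1e_1'C_{k+1}$ into the stated two-term recursion is the level-crossing identity $\lambda_2(k)\,e'C_k=\mu_r\,e_1'C_{k+1}$, valid because $n_2$ moves by $\pm1$ and downward jumps exit through $(0,k+1)$ alone; substituting it turns the downward inflow into $\lambda_2(k)e_1e'C_k$, which is exactly the rank-one correction defining $B_k$, and at $k=0$ it forces $B_0C_0=0$. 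This also explains why the correction carries the upward rate $\lambda_2(k)$ rather than $\mu_r$, a point your phrasing (``records the downward channel'') blurs. The regularity facts you flag do close as you expect, since $e'B_k=\mu_r e_1'$ makes $B_k$ an irreducible nonsingular M-matrix for $k\geq1$, while $e'B_0=0$ makes $B_0$ a transposed generator with a one-dimensional positive null space. As a side benefit, your argument corrects the typographical slips in the printed statement (the exponent $(1/\mu_r)^k$ should be $(1/\mu_r)^n$, the normalizing sum should include the $n=0$ term, and the case list for $a_{i,j}$ conflates the $i=0,\,j>0$ case with the interior case).
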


We now provide an extension of Lemma \ref{lemma3} to more than two classes. Observe that class $j$ $(2<j\leq J)$, sees all priorities classes $i< j$ as a single priority class with a Poisson arrival rate
\begin{align*}
\lambda=\sum_{i=1}^{j-1}\sum_{i=0}^{S_i}\lambda_i(n)P_i(n).
\end{align*}
Therefore, to find the steady-state probability $P_j(n), n=0,...,S_j, 2<j\leq J$, we can apply Lemma \ref{lemma3} by setting $\lambda_1(n)=\lambda$ and $S_1=\infty$. We will use this observation in computing the steady-state probabilities for the system for which the repair shop is an $\bullet/M/1$ with priorities between classes.

%\begin{align}\label{eq29}
%P_{0}(n_i)=\begin{cases}(1-\rho_i^h-\rho_i)+\frac{\rho_i^h}{\rho_i}(1-\rho_i^h-\rho_i)(1-g_i(n_i)& n_i = 0\\
%\rho_iP_{0}(n_i-1)+\rho_i^h\sum_{l=0}^{n_i-1}\left(P_{0}(n_i-1-l)\left[1-\sum_{v=0}^{l}g_i(v)\right]\right)+\\\frac{\rho_i^h}{\rho_i}(1-\rho_i^h-\rho_i)\left[1-\sum_{v=0}^{n_i}g_i(v)\right] & n_i > 0\end{cases}
%\end{align}
%where,

%\begin{align}\label{eq30}
%g_{i}(n_i)=\begin{cases}\frac{(1+\rho_i^h+\rho_i)-\sqrt{(1+\rho_i^h+\rho_i)^2-4\rho_i^h}}{2\rho_i^h},& n_i = 0\\
%\frac{\rho_ig_{i}(n_i-1)+\rho_i^h\sum_{l=1}^{n_i-1}g_{i}(l)g_{i}(n_i-l)}{(1+\rho_i^h+\rho_i)-2\rho_i^hg_{i}(0)},& n_i > 0\\
%\end{cases}\\
%\end{align}

%and
%\begin{align}
%\rho_i^h=\sum_{l=1}^{i-1}\lambda_l
%\end{align}

\section{Numerical investigation}
\label{sec:numericalResults}
In this section we report on an extensive numerical investigation. We first discuss our experiments with one capital good, and then discuss those involving multiple capital goods. The objectives of the numerical experiments differ per setting.

\subsection{One capital good}
We start with evaluating the performance of preventive maintenance in the context of a single capital good through comparative statics. We first report on Figures \ref{fig4}-\ref{fig7} the total cost as a function of $\tau$ and the capacity of the repair shop $K$. We vary $\tau$ between 0 and 2 and we consider values of $K$ between 1 and 5. In order to analyze the sensitivity to the unit costs, we consider values of $C_u \in \{10, 20\}$, $C_d \in \{20, 40\}$ and $C_w = 0.75$.

\begin{figure} [h]
 \begin{minipage}[c]{0.48\textwidth}
 \label{fig:realnework}
  \centering\epsfig{figure=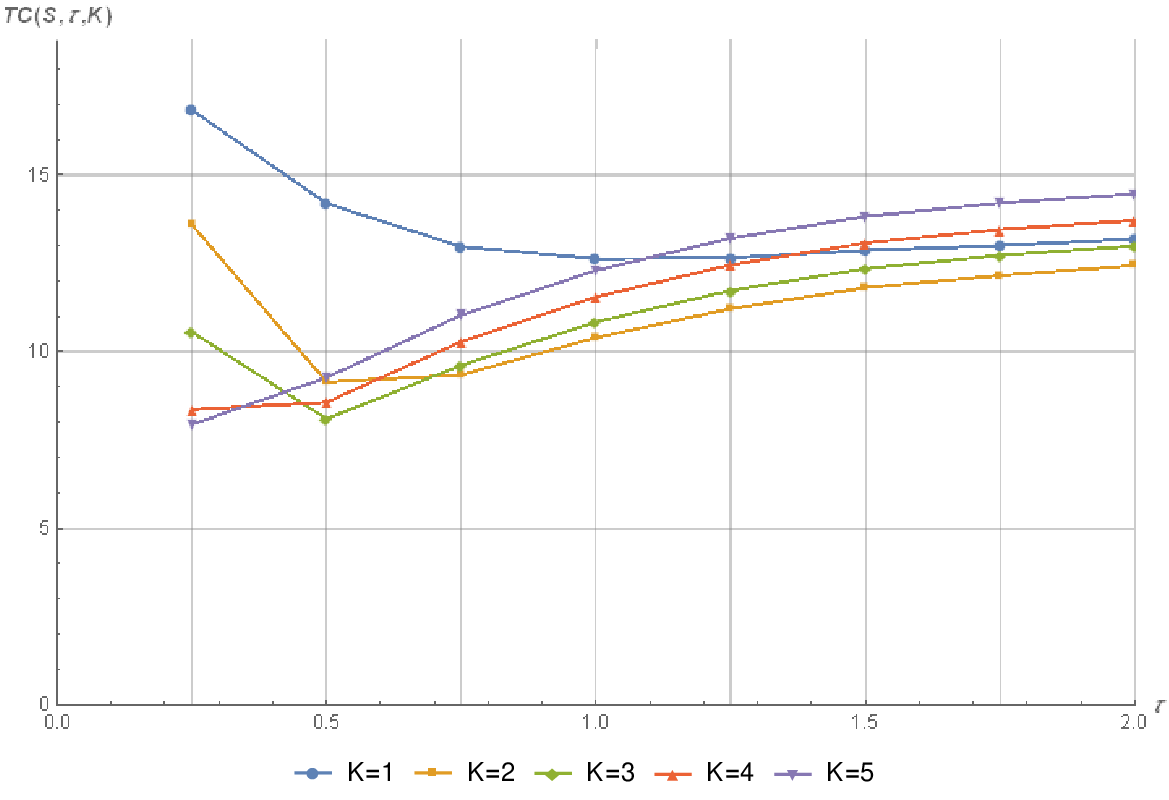, width=8.5cm, height=6cm}
  \caption{Total cost variation as a function of $\tau$ and the capacity $K$, $C_u=10$, $C_d=20$, $C_w=0.75$.}\label{fig4}
 \end{minipage} \hfill
 \begin{minipage}[c]{0.48\linewidth}
 \label{fig:SZnetwork}
  \centering\epsfig{figure=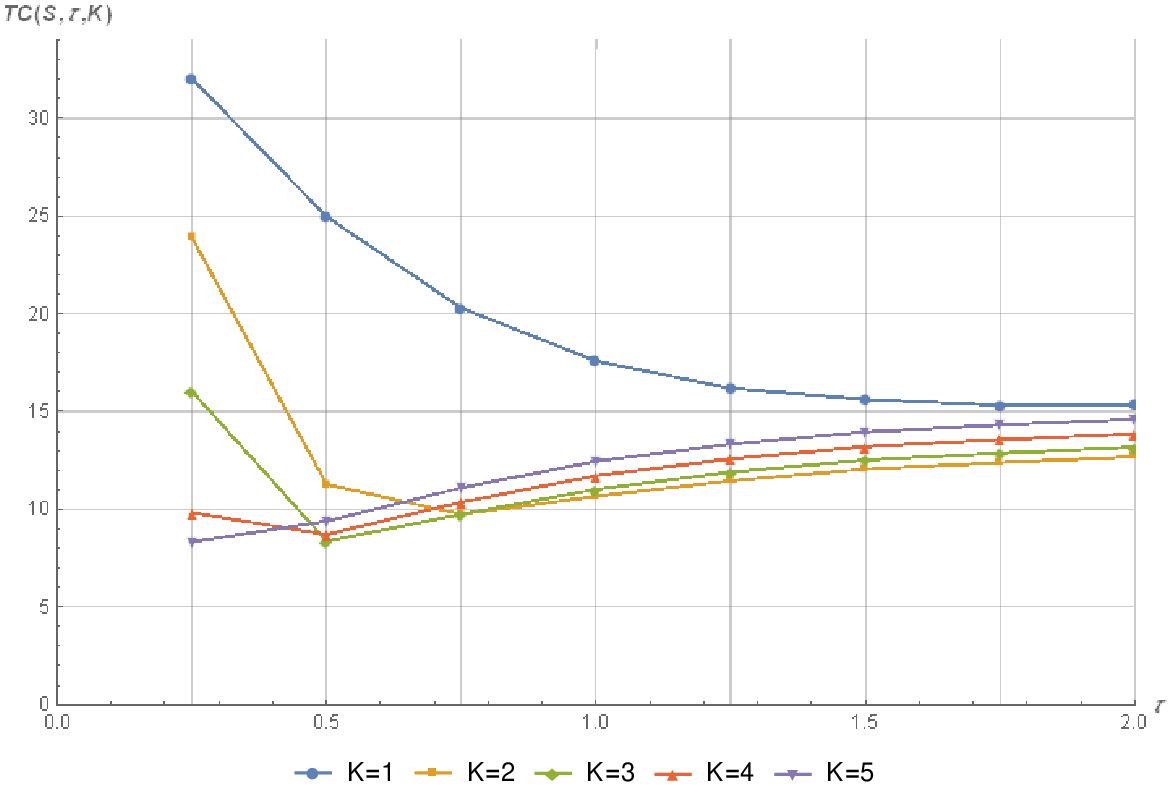, width=8.5cm, height=6cm}
  \caption{Total cost variation as a function of $\tau$ and the capacity $K$, $C_u=10$, $C_d=40$, $C_w=0.75$.}\label{fig5}
 \end{minipage}
\end{figure}

\begin{figure}[h]
 \begin{minipage}[c]{0.48\textwidth}
 \label{fig:realnework}
  \centering\epsfig{figure=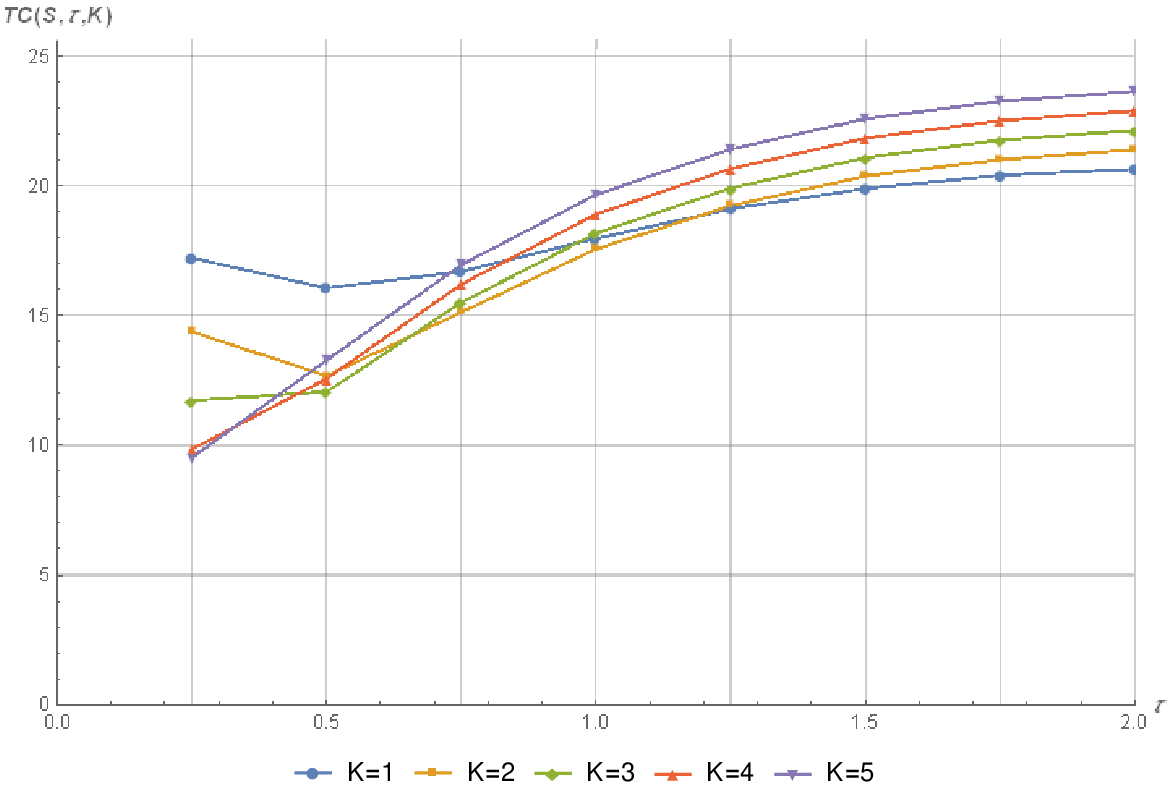, width=8.5cm, height=6cm}
  \caption{Total cost variation as a function of $\tau$ and the capacity $K$, $C_u=20$, $C_d=20$, $C_w=0.75$.}\label{fig6}
 \end{minipage} \hfill
 \begin{minipage}[c]{0.48\linewidth}
 \label{fig:SZnetwork}
  \centering\epsfig{figure=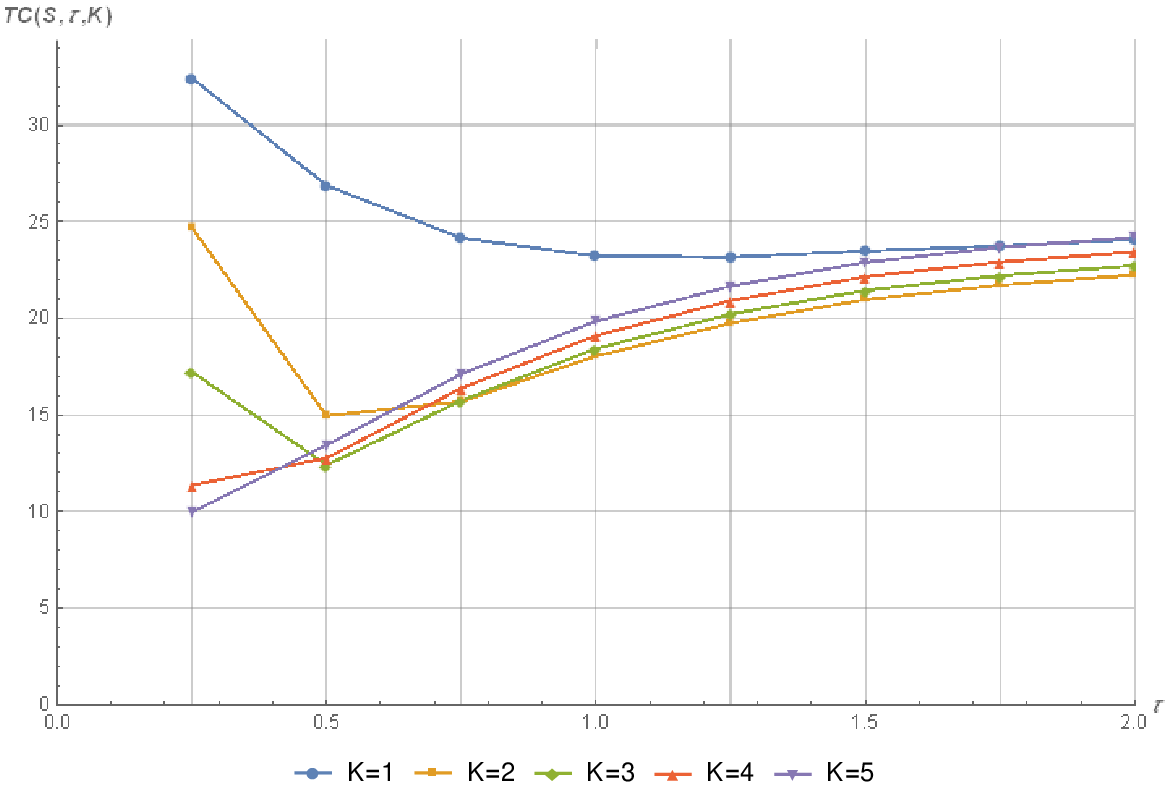, width=8.5cm, height=6cm}
  \caption{Total cost variation as a function of $\tau$ and the capacity $K$, $C_u=20$, $C_d=40$, $C_w=0.75$.}\label{fig7}
 \end{minipage}
\end{figure}

From \ref{fig4}-\ref{fig7} we observe that the optimal costs can vary quite substantially in the capacity of the repair shop $K$, especially when the preventive maintenance threshold is set low. For example, Figures \ref{fig4} and \ref{fig5} indicate that the optimal costs increase significantly when $K$ decreases from $5$ to $1$ for low values of $\tau$.
Figures \ref{fig4}-\ref{fig7} furthermore show that for low capacity $K$, the optimal policy prescribes a high threshold limit to balance the cost of an overloaded repair shop. Conversely, when the repair shop has more capacity, the optimal policy prescribes a lower age-threshold, allowing the asset to take advantage of preventive maintenance and reduce downtime due to breakdowns. This is possible because the repair shop is less congested, allowing parts to be repaired in less time than with a similar system of lower capacity. From these figures we can also deduce that the cost savings due to preventive maintenance can be quite significant. Indeed, everything else fixed, we see that decreasing $\tau$ can lead to significant decreases in costs, which implies that coordinated preventive maintenance (i.e. low $\tau$) has benefit over coordinated corrective (i.e. high $\tau$).
 
In Figures \ref{fig8}-\ref{fig11} we plot the optimal value of inventory level $S$ as a function of $\tau$ and the capacity $K$. We vary $\tau$ between 0 and 2 and $K$ between 1 and 5. We consider the values $C_u \in \{10, 20\}$, $C_d \in \{20, 40\}$ and $C_w = 0.75$.

\begin{figure} [h]
 \begin{minipage}[c]{0.48\textwidth}
 \label{fig:realnework}
  \centering\epsfig{figure=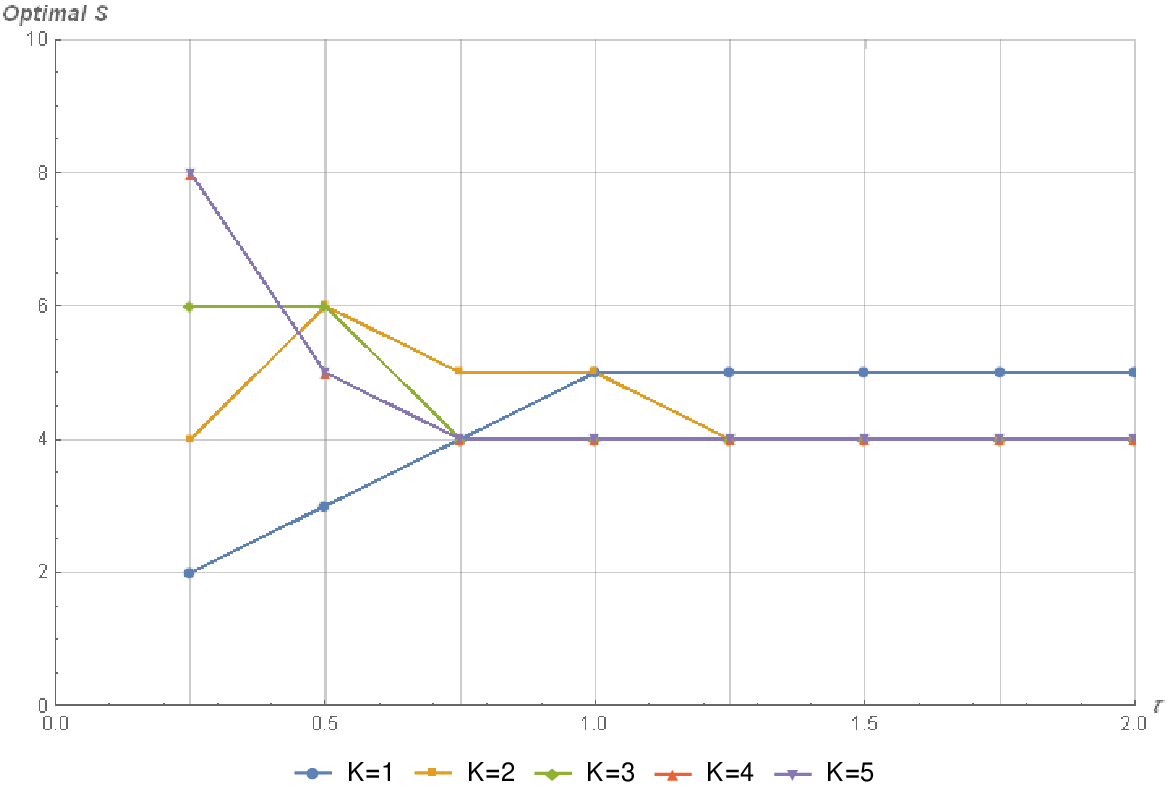, width=8.5cm, height=6cm}
  \caption{Optimal stock level $S$ variation as a function of $\tau$ and the capacity $K$, $C_u=10$, $C_d=20$, $C_w=0.75$.}\label{fig8}
 \end{minipage} \hfill
 \begin{minipage}[c]{0.48\linewidth}
 \label{fig:SZnetwork}
  \centering\epsfig{figure=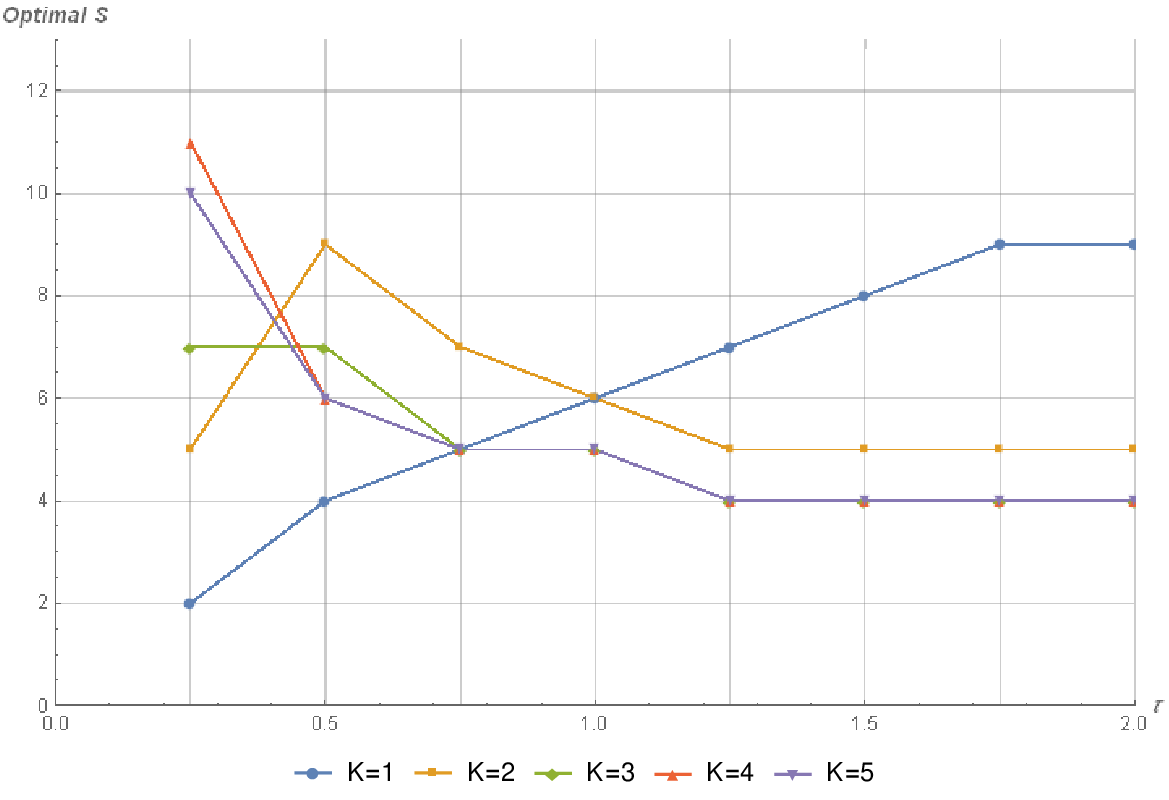, width=8.5cm, height=6cm}
  \caption{Optimal stock level $S$ variation as a function of $\tau$ and the capacity $K$, $C_u=10$, $C_d=40$, $C_w=0.75$.}\label{fig9}
 \end{minipage}
\end{figure}

\begin{figure}[h]
 \begin{minipage}[c]{0.48\textwidth}
 \label{fig:realnework}
  \centering\epsfig{figure=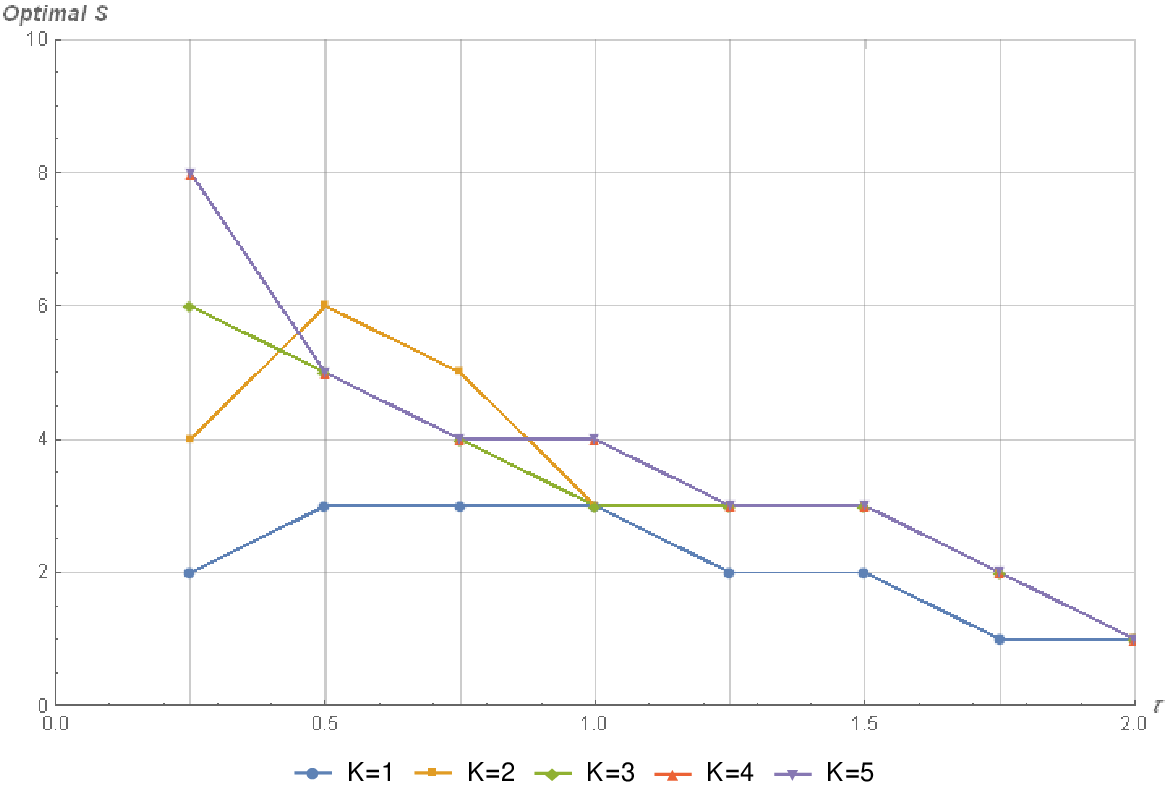, width=8.5cm, height=6cm}
  \caption{Optimal stock level $S$ variation as a function of $\tau$ and the capacity $K$, $C_u=20$, $C_d=20$, $C_w=0.75$.}\label{fig10}
 \end{minipage} \hfill
 \begin{minipage}[c]{0.48\linewidth}
 \label{fig:SZnetwork}
  \centering\epsfig{figure=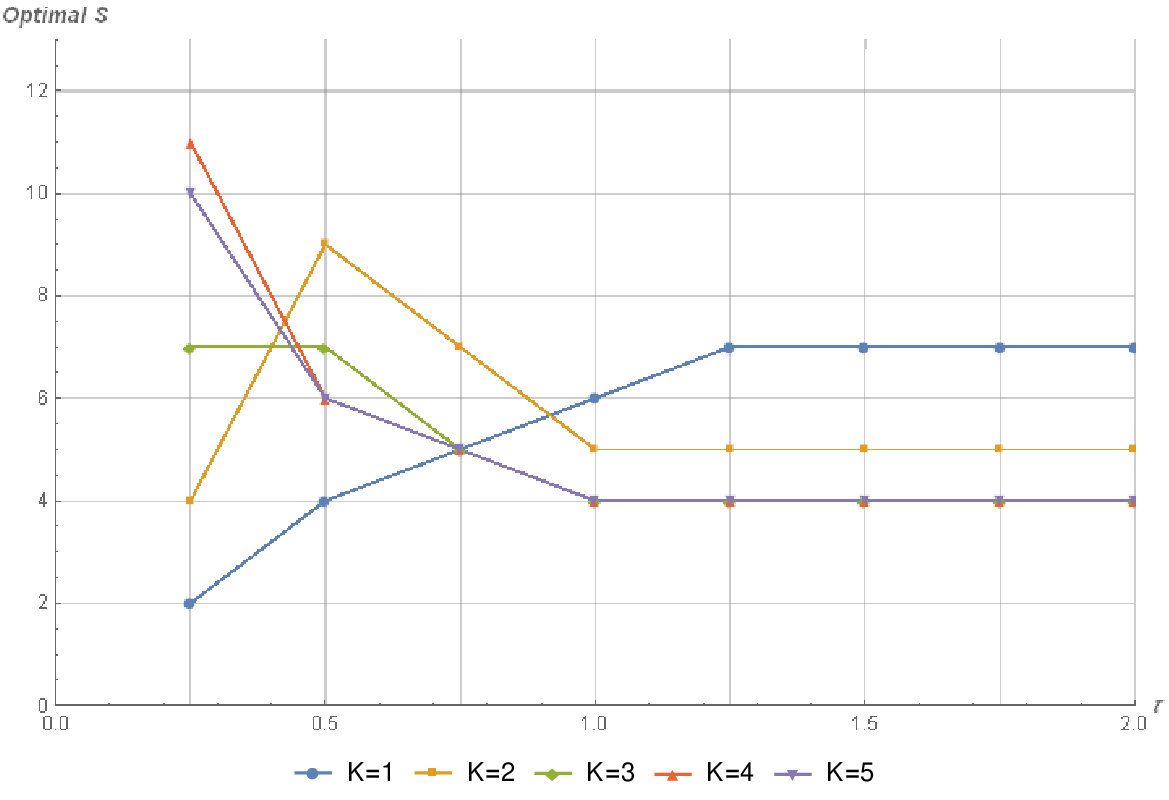, width=8.5cm, height=6cm}
  \caption{Optimal stock level $S$ variation as a function of $\tau$ and the capacity $K$, $C_u=20$, $C_d=40$, $C_w=0.75$.}\label{fig11}
 \end{minipage}
\end{figure}

We observe that the optimal inventory level $S$ is rather small when the repair shop has low capacity and a low age-threshold. If the capacity $K$ increases, we see that the inventory level $S$ also increases. However, as the age-threshold $\tau$ increases, we see that the inventory level $S$ reaches a fixed value that becomes less sensitive to the capacity $K$ of the repair shop. The optimal policy is therefore to choose the value of $S$ that minimizes the total cost for a given value of $K$ with less dependence on the value of the age-threshold. Furthermore, as $K$ increases, the optimal policy prescribes a lower inventory level $S$ since there is less need to compensate for the congestion time when items are in repair. 

Our numerical investigation has so far focused on the effect of varying parameters on the optimal policy and the optimal cost. 
We did so under the assumption that the underlying lifetime distribution is correct. However, in practice, there may be misspecfication in this lifetime distribution as it is often estimated from historical data. Fortunately, as our model permits a general lifetime distribution, we can quantify how robust our model is with respect to such misspecification. To do this, we proceed as follows. We will compare two capital goods with two estimates of the lifetime distribution. In the first case, we assume that the true litetime distribution is a gamma distribution, but that the decision-maker wrongly assumes that it is a Weibull distribution. 
In the second case, we do the opposite, i.e., the true lifetime distribution is a Weibull distribution while the decision maker assumes it is gamma distributed. Both lifetime distributions admit the same mean and the same coefficient of variation. We are now interested in the impact of this error on the total cost. That is, we compute the cost of the optimal policy obtained under the wrong lifetime distribution when implemented in the system with the true lifetime distribution, and then compare this cost with the optimal cost for that latter system.  

Our test bed consists of 3200 instances by permuting various values for $C_u$, $C_d$, $1/\mu_r$, $K$, as well as the coefficient of variation $C_v$ of the true lifetime distribution and the estimated one. For both distributions the average lifetime is fixed to 2 time units. Table \ref{testbed} contains all parameter values. In Table \ref{GammaVsWeibull} we provide the average of the observed percentage cost difference as well as the maximum percentage difference over all instances for a specific parameter value.  The true lifetime distribution can be found in the first column of the Table \ref{GammaVsWeibull} (i.e. Weibull or Gamma); the lifetime distribution as assumed by the decision maker will then be the other.
\begin{figure}[ht]
\begin{center}
\psfig{file=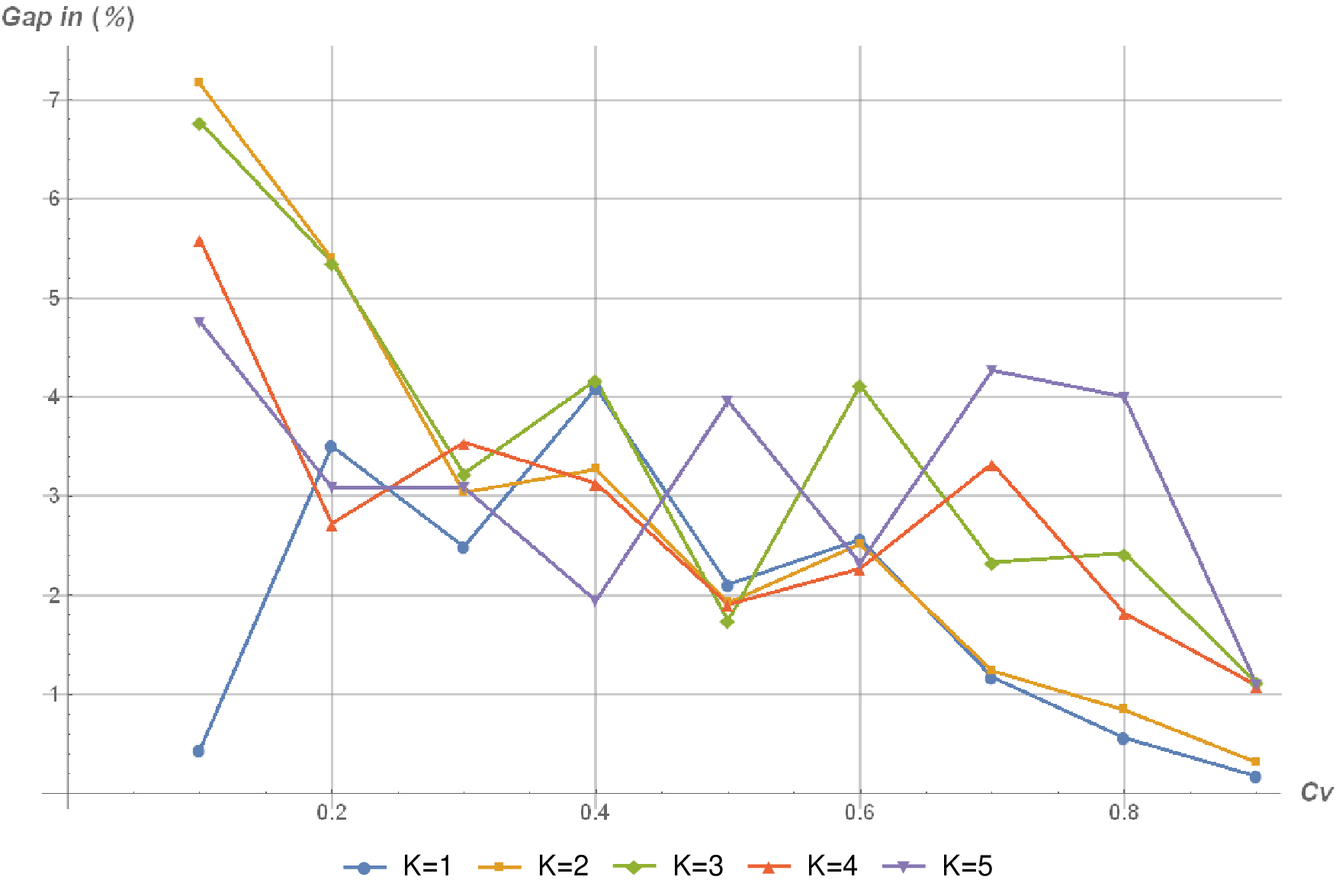, width=10cm, height=5cm}
\caption{Example: Maximum percentage difference between total costs when the lifetime is Weibull versus Gamma distribution}\label{fig12}
\end{center}
\end{figure}

\begin{table}[htbp]
   \begin{center}
{\footnotesize
\renewcommand{\arraystretch}{1}
\setlength{\tabcolsep}{0.28cm}
  \caption{Test bed}\label{testbed}
    \scalebox{0.75}{%
    \begin{tabular}{|c|c|}
    \hline
   Parameters& Values \\
   \hline
$C_d$& $\{5,10,20,40\}$ \\
\hline
$C_u$& $\{1,5,10,20\}$ \\
\hline
$C_v$& $\{0.1,0.2,0.3,0.4,0.5,0.6,0.7,0.8,0.9\}$ \\
\hline
$1/\mu_r$& $\{1,2,3,4\}$ \\
\hline$K$& $\{1,2,3,4,5\}$ \\
  \hline
 \end{tabular}}}
\end{center}
\end{table}
On average over all instances, we observe that the percentage difference when using a wrong estimation of the true lifetime distribution does not have much impact on the total cost. While the maximum difference can go up to 8\%, on average it is only 0.23\% (0.21\%) when the true lifetime distribution is a gamma (Weibull) distribution while the decision-maker assumes that it is Weibull (gamma) distributed. The difference increases with the value of $C_u$ and $K$ and decreases with the value of $C_d$, $C_v$ and $1/\mu_r$. Particular attention should be paid to these cases, especially when $C_d$, $C_v$ and $1/\mu_r$ are low because the system then requires different threshold parameters $\tau$ and a stock level $S$ than those found using the real lifetime distribution. The good performance over all instances is due to the role of the threshold $\tau$ which offsets in most cases the impact in total cost when using a wrong lifetime distribution instead of the real one.

\subsection{Multiple capital goods}
While we developed an exact approach for the single capital good setting, we relied on an approximative procedure for the multiple capital goods setting. Our first objective is therefore to quantify how accurate our approximative procedure is. 
To that end, we consider a setting with two capital goods 1 and 2 whose failure follows an Erlang distribution with a mean $\mu_1=1$ and $\mu_2=2$ and a number of phases respectively equal to 3 and 6. The cost parameters are fixed as follows: $C_u \in\{10,20\}$, $C_d \in\{20,40\}$,  $C_a =0.25$ and $C_w =0.75$. We also set $\tau \in\{0.25,0.5,1,1.25,1.5,1.75,2\}$, $K\in\{1,2,3,4,5\}$ and $\mu_r =1$. For this setting, we assess the value of our approximative procedure by comparing our approximations with simulated values, both for the first come first served and the priority repair shop discipline.  
The simulation model is built in Arena Rockwell. The simulation length is set to 100000 units, which guarantees to obtain a response within an accuracy of $10^{-3}$. Tables \ref{tb1}-\ref{tb3} provides the results of the comparison between our approximative procedure and simulation. 
Based on these tables we conclude that our approximations are quite accurate: the difference between the costs of our model and the costs obtained through simulation is less than 2\% for all instances considered. This shows that our approximative procedure that decomposes the multiple capital goods setting into single capital goods for which we use our exact results performs well.

Now that we found that our approximations are accurate, the second objective of our numerical investigation is to study the effect of using a priority discipline in the repair shop on the overall performance. 
The presence of a scheduling policy based on criticality and costs allows to reduce the downtime and the dedicated stock levels for the most critical capital assets (i.e. those with highest costs).
We study two capital goods, and give priority to capital good 1 over capital good 2. We do not add any additional cost due to the priority to the repair shop. Since capital good 1 has higher priority, we set the capital cost of capital good 1 higher than capital good 2. The results of the comparison are presented in Table \ref{tb4} and \ref{PrioVSNonPrio}.

As the cost of downtime for the priority captial good is high and the cost of failure before threshold is low, and for the non-priority capital good the cost of downtime is low and the cost of failure is high, the potential savings from prioritizing capital good 1 over capital good 2 can reach up to 19\% over first comes first served. This is especially true when the age threshold of capital good 2 decreases. Indeed, this leads to more repair shop utilization for capital good 2, which would delay repair for capital good 1 if served under a first comes first served discipline.
We also find that the age threshold for capital good 1 has less impact on the performance of the prioritized system than the non-prioritized system. Indeed, priority plays somewhat the role of age-threshold to reduce the waiting time and thus the total cost of the system.

\begin{figure}[ht]
\begin{center}
\psfig{file=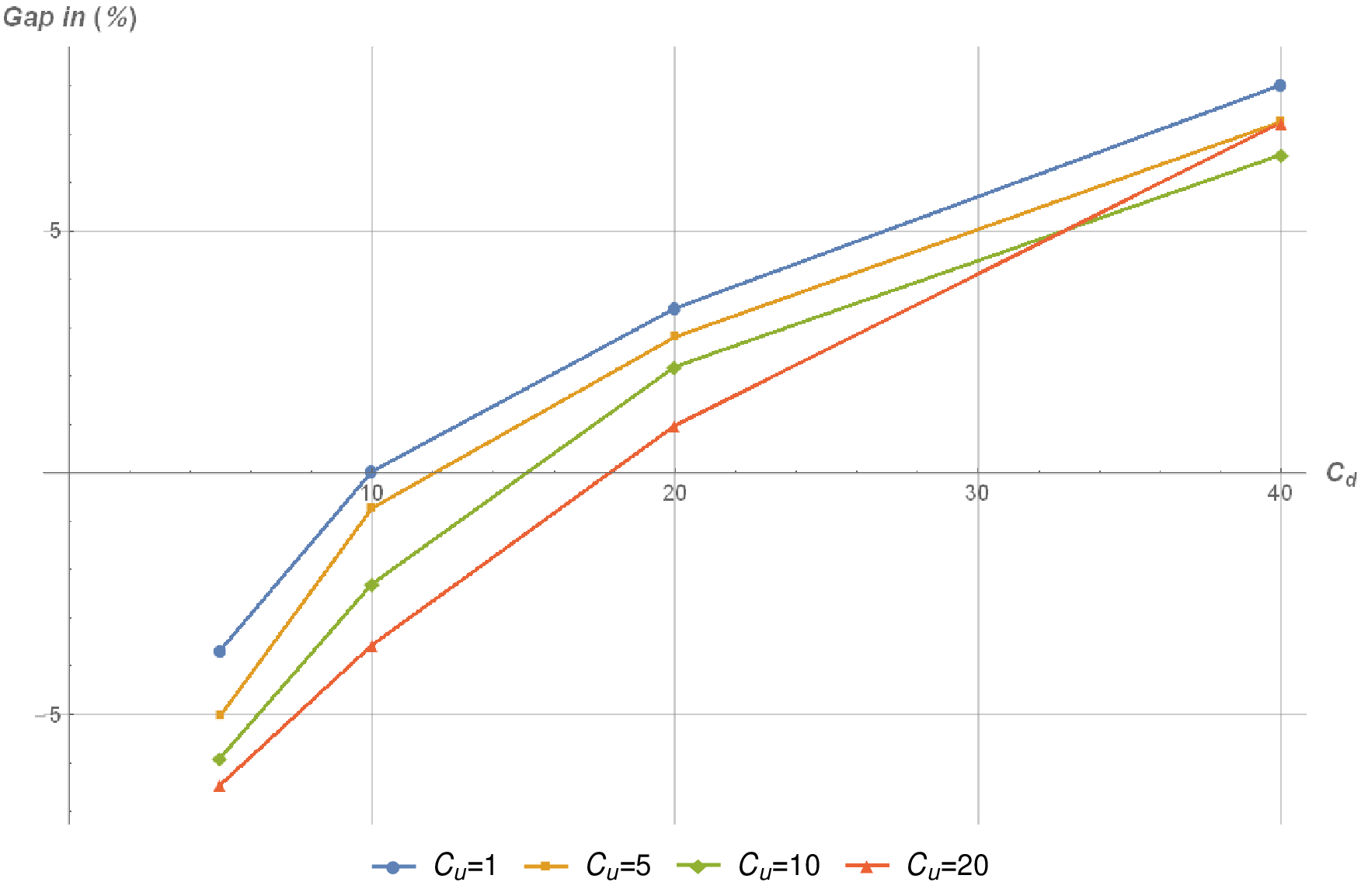, width=10cm, height=5cm}
\caption{Example: Priority versus non priority: $C_u=20$ and $C_d=40$ for the capital good with the highest priority}\label{fig13}
\end{center}
\end{figure}

\section{Conclusions}
\label{sec:concl}
We have considered a repairable spare parts inventory system consisting of a stock point for a single and multiple capital goods and a capacitated repair shop. Parts are replaced either when there is a failure or when their lifetime reaches an age threshold. We have modelled the system as as a cyclic tandem queue network where the number of customers is fixed and equal to base stock level. In the case of a single capital good, we have derived a closed form expression of the steady-state probability of the number of orders in the queuing system. After deriving some convexity properties of the steady state probabilities and the cost function, we have proposed an algorithm to optimize the number of repairables in the system, the age-threshold, the capacity of the repair shop and the base stock level. In the case of multiple capital goods, since a closed form expression of the steady state probability cannot be derived, we provide an approximate expression by replacing in the queuing network the general service time queues with state-dependent exponential service time queues where the latter is obtained by analyzing each station with a general service time in isolation but with a state-dependent arrival rate. We have also considered multiple priorities in the repair shop and a preemptive service discipline, which means that when a failed part with higher priority arrives at the repair shop and another part with a lower priority is being repaired, the server immediately stops repairing the latter and starts with the former. 

The numerical investigation reveals that the cost savings due to preventive maintenance can be substantial. We have shown that in the system with single capital good, when the capacity of the workshop increases, a lower age-threshold should be used, which allows the asset to take advantage of preventive maintenance and hence reduce costs. The results also show that the system uses a low base stock level when the repair shop has a low capacity and a low age-threshold. This base stock level increases with the capacity of the repair shop until it reaches a fixed value that becomes less sensitive to the capacity. Another finding of the numerical analysis is the robustness of our model if there is a misspecification  of the lifetime distribution instead of the general distribution. Moreover, in the case of multiple capital goods, we show the high quality of the derived approximation after comparing it to that obtained through a simulation model and we also show the performance improvement that results from using the priority discipline in the repair shop. 

It is important to notice that the analysis developed in this paper assumes exponential service times in the repair shop. Hence, it would be interesting to extend this work by considering generally distributed service times in the repair shop in addition to the generally distributed lifetime of the parts. Another restrictive assumption made in the case of multiple capital goods is the consideration of a single repairmen in the repair shop, which facilitates the priority rule in the system. Therefore, an interesting avenue for further research would be to consider the case of multiple repairmen, which is complex to analyze even in the case of exponentially distributed service times due to the complexity related to the consideration of a scheduling policy for the parts in the repair shop.

\appendix
\section{Gamma versus Weibull distribution}\label{Append-1}
\begin{table}[H]
 \begin{center}
{\footnotesize
\renewcommand{\arraystretch}{0.5}
\setlength{\tabcolsep}{0.15cm}
  \caption{Percentage difference between total costs when the lifetime is Gamma versus Weibull distribution.}\label{GammaVsWeibull}
  \scalebox{0.75}{%
       \begin{tabular}{|c|l|ccccc|ccccc|}
    \hline
    \multirow{2}[4]{*}{Distribution} & \multicolumn{1}{c|}{Parameters} & \multicolumn{5}{c|}{Average}          & \multicolumn{5}{c|}{Maximum} \\
\cline{3-12}          & \multicolumn{1}{c|}{setting} & \multicolumn{1}{c}{$K=1$} & \multicolumn{1}{c}{$K=2$} & \multicolumn{1}{c}{$K=3$} & \multicolumn{1}{c}{$K=4$} & \multicolumn{1}{c|}{$K=5$} & \multicolumn{1}{c}{$K=1$} & \multicolumn{1}{c}{$K=2$} & \multicolumn{1}{c}{$K=3$} & \multicolumn{1}{c}{$K=4$} & \multicolumn{1}{c|}{$K=5$} \\
    \hline
    \multirow{5}[4]{*}{Gamma} & \multicolumn{1}{l|}{$C_d$} & 0   & 0   & 0   & 0   & 0 & 4   & 7   & 7   & 8   & 6 \\
\cline{2-12}          & \multicolumn{1}{r|}{5} & 0   & 0   & 0   & 0   & 0 & 2   & 7   & 7   & 8   & 5 \\
          & \multicolumn{1}{r|}{10} & 0   & 0   & 0   & 0   & 0 & 2   & 3   & 4   & 6   & 6 \\
          & \multicolumn{1}{r|}{20} & 0   & 0   & 0   & 0   & 0 & 3   & 5   & 5   & 7   & 5 \\
          & \multicolumn{1}{r|}{40} & 0   & 0   & 0   & 0   & 0 & 4   & 4   & 5   & 7   & 5 \\
    \hline
    \multirow{5}[4]{*}{Weibull} & \multicolumn{1}{l|}{$C_d$} & 0   & 0   & 0   & 0   & 0& 4   & 7   & 7   & 6   & 5 \\
\cline{2-12}          & \multicolumn{1}{r|}{5} & 0   & 0   & 0   & 0   & 0 & 2   & 4   & 5   & 5   & 4 \\
          & \multicolumn{1}{r|}{10} & 0   & 0   & 0   & 0   & 0 & 3   & 7   & 7   & 6   & 5 \\
          & \multicolumn{1}{r|}{20} & 0   & 0   & 0   & 0   & 0 & 4   & 5   & 5   & 4   & 3 \\
          & \multicolumn{1}{r|}{40} & 0   & 0   & 0   & 0   & 0 & 4   & 5   & 4   & 4   & 4 \\
    \hline
    \multirow{5}[4]{*}{Gamma} & \multicolumn{1}{l|}{$C_u$} & 0   & 0   & 0   & 0   & 0& 4   & 7   & 7   & 8   & 6 \\
\cline{2-12}          & \multicolumn{1}{r|}{1} & 0   & 0   & 0   & 0   & 0 & 0   & 1   & 1   & 1   & 1 \\
          & \multicolumn{1}{r|}{5} & 0   & 0   & 0   & 0   &0 & 2   & 2   & 3   & 2   & 2 \\
          & \multicolumn{1}{r|}{10} & 0   & 0   & 0   & 0   & 0 & 4   & 3   & 3   & 3   & 3 \\
          & \multicolumn{1}{r|}{20} & 0   & 0   & 1   & 1   &1 & 3   & 7   & 7   & 8   & 6 \\
    \hline
    \multirow{5}[4]{*}{Weibull} & \multicolumn{1}{l|}{$C_u$} & 0   & 0   & 0   & 0   & 0 & 4   & 7   & 7   & 6   & 5 \\
\cline{2-12}          & \multicolumn{1}{r|}{1} & 0   & 0   & 0   & 0   & 0& 0   & 1   & 1   & 1   & 1 \\
          & \multicolumn{1}{r|}{5} & 0   & 0   & 0   & 0   & 0 & 3   & 2   & 2   & 2   & 2 \\
          & \multicolumn{1}{r|}{10} & 0   & 0   & 0   & 0   & 0 & 4   & 5   & 4   & 3   & 3 \\
          & \multicolumn{1}{r|}{20} & 0   & 0   & 1   & 0   & 1 & 4   & 7   & 7   & 6   & 5 \\
    \hline
    \multirow{10}[4]{*}{Gamma} & \multicolumn{1}{l|}{$Cv$} & 0   & 0   & 0   & 0   & 0& 4   & 7   & 7   & 8   & 6 \\
\cline{2-12}          & \multicolumn{1}{r|}{0.1} & 0   & 0   & 0   & 0   & 1 & 3   & 5   & 5   & 4   & 4 \\
          & \multicolumn{1}{r|}{0.2} & 0   & 1   & 0   & 0   & 0& 2   & 7   & 5   & 3   & 4 \\
          & \multicolumn{1}{r|}{0.4} & 0   & 0   & 0   & 0   & 0 & 1   & 5   & 2   & 4   & 2 \\
          & \multicolumn{1}{r|}{0.6} & 0   & 0   & 0   & 0   & 0 & 2   & 3   & 3   & 3   & 4 \\
          & \multicolumn{1}{r|}{0.8} & 0   & 0   & 0   & 0   & 0 & 1   & 2   & 2   & 2   & 4 \\
    \hline
    \multirow{10}[4]{*}{Weibull} & \multicolumn{1}{l|}{$Cv$} & 0   & 0   & 0   & 0   & 0 & 4   & 7   & 7   & 6   & 5 \\
\cline{2-12}          & \multicolumn{1}{r|}{0.1} & 0   & 0   & 1   & 1   & 1& 0   & 7   & 7   & 6   & 5 \\
          & \multicolumn{1}{r|}{0.2} & 0   & 1   & 0   & 0   & 0& 4   & 5   & 5   & 3   & 3 \\
          & \multicolumn{1}{r|}{0.4} & 0   & 0   & 0   & 0   & 0& 4   & 3   & 4   & 3   & 2 \\
          & \multicolumn{1}{r|}{0.6} & 0   & 0   & 0   & 0   & 0 & 3   & 3   & 4   & 2   & 2 \\
          & \multicolumn{1}{r|}{0.8} & 0   & 0   & 0   & 0   & 0 & 1   & 1   & 2   & 2   & 4 \\
    \hline
    \multirow{5}[4]{*}{Gamma} & \multicolumn{1}{l|}{$L$}      & 0   & 0   & 0   & 0   & 0   & 4   & 7   & 7   & 8   & 6 \\
\cline{2-12}          & \multicolumn{1}{r|}{1}      & 0   & 0   & 0   & 0   & 0   & 4   & 5   & 7   & 8   & 6 \\
          &\multicolumn{1}{r|}{2}      & 0   & 0   & 0   & 0   & 0   & 2   & 7   & 5   & 4   & 4 \\
          & \multicolumn{1}{r|}{3}     & 0   & 0   & 0   & 0   & 0   & 3   & 3   & 2   & 4   & 3 \\
          & \multicolumn{1}{r|}{4}      & 0   & 0   & 0   & 0   & 0   & 1   & 2   & 3   & 3   & 4 \\
    \hline
    \multirow{5}[4]{*}{Weibull} & \multicolumn{1}{l|}{$L$}      & 0   & 0   & 0   & 0   & 0   & 4   & 7   & 7   & 6   & 5 \\
\cline{2-12}          & \multicolumn{1}{r|}{1}      & 0   & 0   & 0   & 0   & 0   & 4   & 7   & 7   & 6   & 5 \\
          & \multicolumn{1}{r|}{2}      & 0   & 0   & 0   & 0   & 0   & 4   & 3   & 5   & 5   & 4 \\
          & \multicolumn{1}{r|}{3}      & 0   & 0   & 0   & 0   & 0   & 1   & 5   & 4   & 2   & 3 \\
          & \multicolumn{1}{r|}{4}     & 0   & 0   & 0   & 0   & 0   & 1   & 3   & 3   & 3   & 3 \\
    \hline
    \end{tabular}}}
\end{center}
\end{table}

\section{Comparison with Simulation}\label{Append-2}
\begin{table}[H]
\begin{center}
{\footnotesize
\renewcommand{\arraystretch}{0.6}
\setlength{\tabcolsep}{0.15cm}
  \caption{Comparison with simulation under a fixed capacity $K=1$ and First come first served repair shop discipline.}\label{tb1}
  \scalebox{0.75}{%
    \begin{tabular}{cccc|ccccccc|ccccc|c}
     \hline
    \multicolumn{4}{c|}{\textbf{Input parameters}} & \multicolumn{7}{c|}{\textbf{Our model}} & \multicolumn{5}{c|}{\textbf{Simulation}}&\multicolumn{1}{c}{Difference} \\   
      \cline{1-16}
$C^{2}_{u}$&	$C^{2}_{d}$&	 $\tau_1$ &	 $\tau_2$& 	$S_1$& 	$S_2$& 	$P_1(0)$& $E[I_1]$ & $P_2(0)$& $E[I_2]$& $TC(S,\tau,K)$ & $P_1(0)$& $E[I_1]$ & $P_2(0)$& $E[I_2]$& $TC(S,\tau,K)$& in (\%)\\
    \hline
10	&	20	&	0.5	&	0.5	&	4	&	1	&	0.63	&	0.48	&	0.9	&	0.1	&	47.79	&	0.63	&	0.47	&	0.9	&	0.1	&	47.78	&	0	\\
10	&	20	&	1	&	0.5	&	6	&	1	&	0.35	&	1.2	&	0.92	&	0.08	&	44.31	&	0.36	&	1.16	&	0.92	&	0.08	&	44.39	&	0.2	\\
10	&	20	&	2	&	0.5	&	7	&	1	&	0.21	&	2.12	&	0.91	&	0.09	&	44.29	&	0.2	&	2.08	&	0.91	&	0.09	&	44.22	&	-0.2	\\
10	&	20	&	0.5	&	1	&	1	&	4	&	0.88	&	0.12	&	0.28	&	1.27	&	44.11	&	0.88	&	0.13	&	0.28	&	1.23	&	44.04	&	-0.2	\\
10	&	20	&	0.5	&	2	&	2	&	6	&	0.8	&	0.22	&	0.05	&	3.13	&	40.24	&	0.8	&	0.21	&	0.04	&	3.09	&	40.1	&	-0.4	\\
10	&	20	&	1	&	1	&	4	&	1	&	0.39	&	0.98	&	0.79	&	0.21	&	42.56	&	0.4	&	0.96	&	0.79	&	0.21	&	42.53	&	-0.1	\\
10	&	20	&	1	&	2	&	2	&	5	&	0.67	&	0.39	&	0.07	&	2.57	&	38.37	&	0.66	&	0.38	&	0.07	&	2.5	&	38.24	&	-0.3	\\
10	&	20	&	2	&	1	&	4	&	2	&	0.38	&	1.05	&	0.65	&	0.4	&	42.37	&	0.38	&	1	&	0.65	&	0.39	&	42.38	&	0	\\
10	&	20	&	2	&	2	&	2	&	5	&	0.6	&	0.49	&	0.07	&	2.67	&	38.27	&	0.59	&	0.48	&	0.06	&	2.59	&	38.15	&	-0.3	\\
20	&	20	&	0.5	&	0.5	&	4	&	1	&	0.63	&	0.48	&	0.9	&	0.1	&	47.8	&	0.63	&	0.47	&	0.9	&	0.1	&	47.79	&	0	\\
20	&	20	&	1	&	0.5	&	6	&	1	&	0.35	&	1.2	&	0.92	&	0.08	&	44.32	&	0.36	&	1.16	&	0.92	&	0.08	&	44.4	&	0.2	\\
20	&	20	&	2	&	0.5	&	7	&	1	&	0.21	&	2.12	&	0.91	&	0.09	&	44.3	&	0.2	&	2.08	&	0.91	&	0.09	&	44.23	&	-0.2	\\
20	&	20	&	0.5	&	1	&	1	&	4	&	0.88	&	0.12	&	0.28	&	1.27	&	44.72	&	0.88	&	0.13	&	0.28	&	1.23	&	44.65	&	-0.1	\\
20	&	20	&	0.5	&	2	&	2	&	4	&	0.77	&	0.25	&	0.14	&	1.82	&	43.27	&	0.78	&	0.24	&	0.13	&	1.74	&	43.19	&	-0.2	\\
20	&	20	&	1	&	1	&	4	&	1	&	0.39	&	0.98	&	0.79	&	0.21	&	42.74	&	0.4	&	0.96	&	0.79	&	0.21	&	42.71	&	-0.1	\\
20	&	20	&	1	&	2	&	3	&	3	&	0.56	&	0.58	&	0.29	&	1.1	&	41.29	&	0.56	&	0.56	&	0.29	&	1.03	&	41.33	&	0.1	\\
20	&	20	&	2	&	1	&	4	&	2	&	0.38	&	1.05	&	0.65	&	0.4	&	42.67	&	0.38	&	1	&	0.65	&	0.39	&	42.68	&	0	\\
20	&	20	&	2	&	2	&	3	&	4	&	0.51	&	0.68	&	0.18	&	1.62	&	41.15	&	0.51	&	0.65	&	0.18	&	1.51	&	41.14	&	0	\\
   \hline
    \multicolumn{16}{c}{Fixed parameters: $C^{1}_{u}=20$, $C^{1}_d=40$,$C^{1}_a=C^{1}_a=0.25$,$C_w=0.5$.}\\
    \hline
    \end{tabular}}}
\end{center}
\end{table}

\begin{table}[H]
\begin{center}
{\footnotesize
\renewcommand{\arraystretch}{0.6}
\setlength{\tabcolsep}{0.15cm}
  \caption{Comparison with simulation under a fixed capacity $K=5$ and First come first served repair shop discipline.}\label{tb2}
  \scalebox{0.75}{%
    \begin{tabular}{cccc|ccccccc|ccccc|c}
     \hline
    \multicolumn{4}{c|}{\textbf{Input parameters}} & \multicolumn{7}{c|}{\textbf{Our model}} & \multicolumn{5}{c|}{\textbf{Simulation}}&\multicolumn{1}{c}{Difference} \\   
      \cline{1-16}
$C^{2}_{u}$&	$C^{2}_{d}$&	 $\tau_1$ &	 $\tau_2$& 	$S_1$& 	$S_2$& 	$P_1(0)$& $E[I_1]$ & $P_2(0)$& $E[I_2]$& $TC(S,\tau,K)$ & $P_1(0)$& $E[I_1]$ & $P_2(0)$& $E[I_2]$& $TC(S,\tau,K)$& in (\%)\\
    \hline
10	&	20	&	0.5	&	0.5	&	7	&	6	&	0	&	4.51	&	0.01	&	3.7	&	14.33	&	0.01	&	4.44	&	0.01	&	3.61	&	14.29	&	0	\\
10	&	20	&	1	&	0.5	&	5	&	5	&	0	&	3.55	&	0.01	&	2.96	&	20.3	&	0	&	3.66	&	0.01	&	2.94	&	20.14	&	0	\\
10	&	20	&	2	&	0.5	&	4	&	5	&	0.01	&	2.73	&	0.01	&	2.97	&	24.59	&	0	&	2.96	&	0.01	&	2.96	&	24.35	&	0	\\
10	&	20	&	0.5	&	1	&	6	&	4	&	0	&	3.8	&	0	&	2.95	&	14.16	&	0	&	3.82	&	0	&	2.95	&	14.08	&	0	\\
10	&	20	&	0.5	&	2	&	6	&	3	&	0	&	3.83	&	0	&	2.37	&	16.33	&	0	&	3.85	&	0	&	2.41	&	16.25	&	0	\\
10	&	20	&	1	&	1	&	4	&	4	&	0.01	&	2.66	&	0	&	2.98	&	20.49	&	0.01	&	2.72	&	0	&	2.98	&	20.35	&	0	\\
10	&	20	&	1	&	2	&	4	&	3	&	0.01	&	2.66	&	0	&	2.37	&	22.69	&	0.01	&	2.71	&	0	&	2.41	&	22.57	&	0	\\
10	&	20	&	2	&	1	&	4	&	4	&	0.01	&	2.76	&	0	&	2.98	&	24.82	&	0	&	2.98	&	0	&	2.97	&	24.7	&	0	\\
10	&	20	&	2	&	2	&	4	&	3	&	0.01	&	2.76	&	0	&	2.37	&	27.02	&	0	&	2.98	&	0	&	2.4	&	26.9	&	0	\\
20	&	20	&	0.5	&	0.5	&	7	&	6	&	0	&	4.51	&	0.01	&	3.7	&	14.41	&	0.01	&	4.44	&	0.01	&	3.61	&	14.38	&	0	\\
20	&	20	&	1	&	0.5	&	5	&	5	&	0	&	3.55	&	0.01	&	2.96	&	20.38	&	0	&	3.66	&	0.01	&	2.94	&	20.23	&	0	\\
20	&	20	&	2	&	0.5	&	4	&	5	&	0.01	&	2.73	&	0.01	&	2.97	&	24.67	&	0	&	2.96	&	0.01	&	2.96	&	24.44	&	0	\\
20	&	20	&	0.5	&	1	&	6	&	4	&	0	&	3.8	&	0	&	2.95	&	15.01	&	0	&	3.82	&	0	&	2.95	&	14.94	&	0	\\
20	&	20	&	0.5	&	2	&	6	&	3	&	0	&	3.83	&	0	&	2.37	&	19.62	&	0	&	3.85	&	0	&	2.41	&	19.55	&	0	\\
20	&	20	&	1	&	1	&	4	&	4	&	0.01	&	2.66	&	0	&	2.98	&	21.35	&	0.01	&	2.72	&	0	&	2.98	&	21.2	&	0	\\
20	&	20	&	1	&	2	&	4	&	3	&	0.01	&	2.66	&	0	&	2.37	&	25.99	&	0.01	&	2.71	&	0	&	2.41	&	25.87	&	0	\\
20	&	20	&	2	&	1	&	4	&	4	&	0.01	&	2.76	&	0	&	2.98	&	25.67	&	0	&	2.98	&	0	&	2.97	&	25.56	&	0	\\
20	&	20	&	2	&	2	&	4	&	3	&	0.01	&	2.76	&	0	&	2.37	&	30.32	&	0	&	2.98	&	0	&	2.4	&	30.2	&	0	\\
   \hline
    \multicolumn{16}{c}{Fixed parameters: $C^{1}_{u}=20$, $C^{1}_d=40$,$C^{1}_a=C^{1}_a=0.25$,$C_w=0.5$.}\\
    \hline
    \end{tabular}}}
\end{center}
\end{table}
\begin{table}[H]
\begin{center}
{\footnotesize
\renewcommand{\arraystretch}{0.6}
\setlength{\tabcolsep}{0.15cm}
  \caption{Comparison with simulation under a fixed capacity $K=1$ and priority rule at the repair shop.}\label{tb3}
  \scalebox{0.75}{%
    \begin{tabular}{cccc|ccccccc|ccccc|c}
     \hline
    \multicolumn{4}{c|}{\textbf{Input parameters}} & \multicolumn{7}{c|}{\textbf{Our model}} & \multicolumn{5}{c|}{\textbf{Simulation}}&\multicolumn{1}{c}{Difference} \\   
      \cline{1-16}
$C^{2}_{u}$&	$C^{2}_{d}$&	 $\tau_1$ &	 $\tau_2$& 	$S_1$& 	$S_2$& 	$P_1(0)$& $E[I_1]$ & $P_2(0)$& $E[I_2]$& $TC(S,\tau,K)$ & $P_1(0)$& $E[I_1]$ & $P_2(0)$& $E[I_2]$& $TC(S,\tau,K)$& in (\%)\\
    \hline
10	&	20	&	0.5	&	0.5	&	3	&	1	&	0.54	&	0.62	&	0.99	&	0.01	&	46.69	&	0.54	&	0.62	&	0.99	&	0.01	&	46.64	&	0	\\
10	&	20	&	1	&	0.5	&	5	&	1	&	0.25	&	1.62	&	0.99	&	0.01	&	43.02	&	0.25	&	1.64	&	0.99	&	0.01	&	42.95	&	0	\\
10	&	20	&	2	&	0.5	&	7	&	1	&	0.11	&	3.2	&	0.98	&	0.02	&	43.72	&	0.1	&	3.35	&	0.97	&	0.03	&	43.36	&	0	\\
10	&	20	&	0.5	&	1	&	1	&	3	&	0.68	&	0.32	&	0.69	&	0.4	&	45.45	&	0.68	&	0.32	&	0.69	&	0.4	&	45.41	&	0	\\
10	&	20	&	0.5	&	2	&	1	&	4	&	0.68	&	0.32	&	0.48	&	0.85	&	42.96	&	0.68	&	0.32	&	0.48	&	0.86	&	42.79	&	0	\\
10	&	20	&	1	&	1	&	2	&	2	&	0.38	&	0.82	&	0.82	&	0.21	&	42.45	&	0.38	&	0.82	&	0.82	&	0.22	&	42.52	&	0	\\
10	&	20	&	1	&	2	&	1	&	5	&	0.56	&	0.44	&	0.31	&	1.52	&	39.48	&	0.56	&	0.44	&	0.3	&	1.54	&	39.35	&	0	\\
10	&	20	&	2	&	1	&	2	&	3	&	0.31	&	0.98	&	0.73	&	0.37	&	42.18	&	0.3	&	0.98	&	0.73	&	0.38	&	42.2	&	0	\\
10	&	20	&	2	&	2	&	1	&	6	&	0.51	&	0.49	&	0.22	&	2.11	&	39.08	&	0.51	&	0.49	&	0.22	&	2.14	&	38.95	&	0	\\
20	&	20	&	0.5	&	0.5	&	3	&	1	&	0.54	&	0.62	&	0.99	&	0.01	&	46.69	&	0.54	&	0.62	&	0.99	&	0.01	&	46.64	&	0	\\
20	&	20	&	1	&	0.5	&	5	&	1	&	0.25	&	1.62	&	0.99	&	0.01	&	43.02	&	0.25	&	1.64	&	0.99	&	0.01	&	42.95	&	0	\\
20	&	20	&	2	&	0.5	&	7	&	1	&	0.11	&	3.2	&	0.98	&	0.02	&	43.72	&	0.1	&	3.35	&	0.97	&	0.03	&	43.37	&	0	\\
20	&	20	&	0.5	&	1	&	1	&	3	&	0.68	&	0.32	&	0.69	&	0.4	&	45.71	&	0.68	&	0.32	&	0.69	&	0.4	&	45.67	&	0	\\
20	&	20	&	0.5	&	2	&	1	&	4	&	0.68	&	0.32	&	0.48	&	0.85	&	44.66	&	0.68	&	0.32	&	0.48	&	0.86	&	44.51	&	0	\\
20	&	20	&	1	&	1	&	2	&	2	&	0.38	&	0.82	&	0.82	&	0.21	&	42.61	&	0.38	&	0.82	&	0.82	&	0.22	&	42.67	&	0	\\
20	&	20	&	1	&	2	&	1	&	5	&	0.56	&	0.44	&	0.31	&	1.52	&	41.76	&	0.56	&	0.44	&	0.3	&	1.54	&	41.65	&	0	\\
20	&	20	&	2	&	1	&	2	&	3	&	0.31	&	0.98	&	0.73	&	0.37	&	42.41	&	0.3	&	0.98	&	0.73	&	0.38	&	42.43	&	0	\\
20	&	20	&	2	&	2	&	2	&	4	&	0.31	&	0.98	&	0.54	&	0.77	&	41.52	&	0.3	&	0.98	&	0.55	&	0.8	&	41.5	&	0	\\
   \hline
    \multicolumn{16}{c}{Fixed parameters: $C^{1}_{u}=20$, $C^{1}_d=40$,$C^{1}_a=C^{1}_a=0.25$,$C_w=0.5$.}\\
    \hline
    \end{tabular}}}
\end{center}
\end{table}

\section{Value of Priority versus age-threshold}\label{Append-3}
\begin{table}[H]
\begin{center}
{\footnotesize
\renewcommand{\arraystretch}{0.6}
\setlength{\tabcolsep}{0.4cm}
  \caption{Value of Priority vs age-threshold}\label{tb4}
  \scalebox{0.75}{%
    \begin{tabular}{cc|cc|ccccc|ccccc|c}
    \hline
    \multicolumn{4}{c|}{Cost parameters} & \multicolumn{5}{c|}{Optimal policy}      & \multicolumn{5}{c|}{Optimal policy} &    Difference\\
    \cline{1-4}
    \multicolumn{2}{c|}{Item 1} & \multicolumn{2}{c|}{Item 2}&  \multicolumn{5}{c|}{with priority}&  \multicolumn{5}{c|}{without priority}&in\\  
    $C_u$& $C_d$& $C_u$& $C_d$& $\tau_1$& $\tau_1$&$S_1$&$S_2$& $Z_1$&$\tau_1$&$\tau_2$&$S_1$&$S_2$&$Z_2$&  \%  \\
\hline
1	&	20	&	1	&	5	&	5	&	5	&	7	&	1	&	9.55	&	5	&	5	&	7	&	1	&	9.92	&	-4	\\
1	&	20	&	5	&	5	&	5	&	2	&	7	&	1	&	9.65	&	5	&	2	&	8	&	1	&	10.43	&	-8	\\
1	&	40	&	1	&	5	&	5	&	5	&	10	&	1	&	11.18	&	5	&	5	&	12	&	1	&	12.65	&	-13	\\
1	&	40	&	5	&	5	&	5	&	2	&	10	&	1	&	11.25	&	5	&	3	&	12	&	1	&	13.04	&	-16	\\
5	&	20	&	5	&	5	&	5	&	2	&	6	&	1	&	13.23	&	5	&	2	&	6	&	1	&	13.74	&	-4	\\
5	&	40	&	1	&	5	&	5	&	5	&	9	&	1	&	14.9	&	5	&	5	&	11	&	1	&	16.18	&	-9	\\
5	&	40	&	5	&	5	&	5	&	2	&	10	&	1	&	14.98	&	5	&	3	&	12	&	1	&	16.6	&	-11	\\
1	&	10	&	20	&	5	&	5	&	1	&	5	&	1	&	8.56	&	5	&	1	&	5	&	1	&	8.95	&	-5	\\
1	&	20	&	10	&	5	&	5	&	1	&	7	&	1	&	9.69	&	5	&	1	&	8	&	1	&	10.74	&	-11	\\
1	&	20	&	20	&	5	&	5	&	1	&	7	&	1	&	9.72	&	5	&	1	&	8	&	1	&	10.87	&	-12	\\
1	&	40	&	10	&	5	&	5	&	1	&	10	&	1	&	11.28	&	5	&	2	&	13	&	1	&	13.33	&	-18	\\
1	&	40	&	1	&	10	&	5	&	5	&	9	&	1	&	15.95	&	5	&	5	&	10	&	1	&	16.55	&	-4	\\
1	&	40	&	20	&	5	&	5	&	1	&	10	&	1	&	11.3	&	5	&	1	&	13	&	1	&	13.43	&	-19	\\
1	&	40	&	5	&	10	&	5	&	4	&	10	&	1	&	16.04	&	5	&	5	&	11	&	1	&	17.01	&	-6	\\
5	&	20	&	10	&	5	&	5	&	1	&	6	&	1	&	13.28	&	5	&	1	&	7	&	1	&	14.05	&	-6	\\
5	&	20	&	20	&	5	&	5	&	1	&	6	&	1	&	13.31	&	5	&	1	&	7	&	1	&	14.19	&	-7	\\
5	&	40	&	10	&	5	&	5	&	1	&	10	&	1	&	15.01	&	5	&	2	&	12	&	1	&	16.89	&	-13	\\
5	&	40	&	20	&	5	&	5	&	1	&	10	&	1	&	15.03	&	5	&	1	&	12	&	1	&	16.99	&	-13	\\
5	&	40	&	5	&	10	&	5	&	4	&	9	&	1	&	19.75	&	5	&	5	&	10	&	1	&	20.5	&	-4	\\
1	&	20	&	20	&	10	&	5	&	1	&	7	&	1	&	14.55	&	5	&	1	&	7	&	1	&	15.1	&	-4	\\
1	&	40	&	10	&	10	&	5	&	2	&	10	&	1	&	16.12	&	5	&	2	&	11	&	1	&	17.46	&	-8	\\
1	&	40	&	20	&	10	&	5	&	1	&	10	&	1	&	16.18	&	5	&	1	&	12	&	1	&	17.91	&	-11	\\
5	&	40	&	10	&	10	&	5	&	2	&	9	&	1	&	19.84	&	5	&	2	&	11	&	1	&	20.97	&	-6	\\
5	&	40	&	20	&	10	&	5	&	1	&	9	&	1	&	19.9	&	5	&	1	&	11	&	1	&	21.44	&	-8	\\
   \hline
    \end{tabular}}}
\end{center}
\end{table}

\begin{table}[H]
\begin{center}
{\footnotesize
\renewcommand{\arraystretch}{0.6}
\setlength{\tabcolsep}{0.25cm}
  \caption{Priority versus non priority}  \label{PrioVSNonPrio}
  \scalebox{0.75}{%
    \begin{tabular}{|c|l|lrrrr|rrrr|rrrr|rrrr|}
\cline{4-19}    \multicolumn{1}{r}{} & \multicolumn{1}{r}{} & \multicolumn{1}{c|}{} & \multicolumn{16}{c|}{Captial good 1 with high priority} \\
\cline{3-19}    \multicolumn{1}{r}{} &       & \multicolumn{1}{l|}{$C_u$} & \multicolumn{4}{l|}{1}        & \multicolumn{4}{l|}{5}        & \multicolumn{4}{l|}{10}       & \multicolumn{4}{l|}{20} \\
\cline{2-19}    \multicolumn{1}{c|}{} & {$C_u$}    & \multicolumn{1}{l|}{$C_d$} & \multicolumn{1}{l}{5} & \multicolumn{1}{l}{10} & \multicolumn{1}{l}{20} & \multicolumn{1}{l|}{40} & \multicolumn{1}{l}{5} & \multicolumn{1}{l}{10} & \multicolumn{1}{l}{20} & \multicolumn{1}{l|}{40} & \multicolumn{1}{l}{5} & \multicolumn{1}{l}{10} & \multicolumn{1}{l}{20} & \multicolumn{1}{l|}{40} & \multicolumn{1}{l}{5} & \multicolumn{1}{l}{10} & \multicolumn{1}{l}{20} & \multicolumn{1}{l|}{40} \\
    \hline
    \multirow{16}[8]{*}{\rotatebox[origin=c]{90}{Captial good 2 with low priority}} & \multirow{4}[2]{*}{1} &  \multicolumn{1}{l|}{5}     & 4   & -1  & -4  & -13 & 13  & 3   & 0   & -9  & 19  & 7   & -1  & -5  & 23  & 13  & 2   & -4 \\
          &       &  \multicolumn{1}{l|}{10}     &       & 6   & 0   & -4  &       & 7   & 2   & -1  &       & 16  & 3   & 0   &       & 24  & 8   & 0 \\
          &       &  \multicolumn{1}{l|}{20}      &       &       & 7   & 1   &       &       & 7   & 2   &       &       & 8   & 4   &       &       & 17  & 3 \\
          &       & \multicolumn{1}{l|}{40}    &       &       &       & 8   &       &       &       & 8   &       &       &       & 7   &       &       &       & 8 \\
\cline{2-19}          & \multirow{4}[2]{*}{5} & \multicolumn{1}{l|}{5}      &       &       &       &       & 10  & 1   & -4  & -10 & 15  & 4   & -2  & -7  & 17  & 9   & 0   & -5 \\
          &       & \multicolumn{1}{l|}{10}     &       &       &       &       &       & 7   & 0   & -4  &       & 14  & 2   & -2  &       & 21  & 7   & -1 \\
          &       & \multicolumn{1}{l|}{20}     &       &       &       &       &       &       & 6   & 1   &       &       & 7   & 2   &       &       & 15  & 3 \\
          &       & \multicolumn{1}{l|}{40}     &       &       &       &       &       &       &       & 7   &       &       &       & 7   &       &       &       & 7 \\
\cline{2-19}          & \multirow{4}[2]{*}{10} & \multicolumn{1}{l|}{5}      &       &       &       &       &       &       &       &       & 12  & 2   & -4  & -8  & 14  & 6   & -1  & -6 \\
          &       & \multicolumn{1}{l|}{10}     &       &       &       &       &       &       &       &       &       & 13  & 1   & -3  &       & 19  & 6   & -2 \\
          &       &\multicolumn{1}{l|}{20}     &       &       &       &       &       &       &       &       &       &       & 7   & 1   &       &       & 16  & 2 \\
          &       &\multicolumn{1}{l|}{40}    &       &       &       &       &       &       &       &       &       &       &       & 6   &       &       &       & 7 \\
\cline{2-19}          & \multirow{4}[2]{*}{20} & \multicolumn{1}{l|}{5}      &       &       &       &       &       &       &       &       &       &       &       &       & 10  & 3   & -3  & -6 \\
          &       &\multicolumn{1}{l|}{10}    &       &       &       &       &       &       &       &       &       &       &       &       &       & 16  & 3   & -4 \\
          &       & \multicolumn{1}{l|}{20}    &       &       &       &       &       &       &       &       &       &       &       &       &       &       & 16  & 1 \\
          &       & \multicolumn{1}{l|}{40}     &       &       &       &       &       &       &       &       &       &       &       &       &       &       &       & 7 \\
   \hline
    \end{tabular}}}
\end{center}
\end{table}

%% The Appendices part is started with the command \appendix;
%% appendix sections are then done as normal sections
%\section*{Acknowledgements}$$
%The second author gratefully acknowledges the support of the National Research Fund of Luxembourg (AFR grant 12451704). The work of the third author is supported by the Data Science Flagship framework, a cooperation between Eindhoven University of Technology and Philips.

%% References
%%
%% Following citation commands can be used in the body text:
%% Usage of \cite is as follows:
%%   \cite{key}         ==>>  [#]
%%   \cite[chap. 2]{key} ==>> [#, chap. 2]
%%

%% References with bibTeX database:

%\bibliographystyle{elsarticle-num}
%\bibliographystyle{elsarticle-num-names}
% \bibliographystyle{model1a-num-names}
% \bibliographystyle{model1b-num-names}
% \bibliographystyle{model1c-num-names}
% \bibliographystyle{model1-num-names}
%\bibliographystyle{model2-names}
% \bibliographystyle{model3a-num-names}
% \bibliographystyle{model3-num-names}
% \bibliographystyle{model4-names}

%\clearpage
\bibliographystyle{model5-names}
\bibliography{sample}

\begin{thebibliography}{52}
\expandafter\ifx\csname natexlab\endcsname\relax\def\natexlab#1{#1}\fi
\providecommand{\bibinfo}[2]{#2}
\ifx\xfnm\relax \def\xfnm[#1]{\unskip,\space#1}\fi
%Type = Article
\bibitem[{Abouee-Mehrizi \& Baron(2016)}]{Abouee-Mehrizi2016}
\bibinfo{author}{Abouee-Mehrizi, H.}, \& \bibinfo{author}{Baron, O.}
  (\bibinfo{year}{2016}).
\newblock \bibinfo{title}{{State-dependent M/G/1 queueing systems}}.
\newblock {\it \bibinfo{journal}{Queueing Systems: Theory and Applications}\/},
   {\it \bibinfo{volume}{82}\/}, \bibinfo{pages}{121--148}.
%Type = Article
\bibitem[{Al~Hanbali \& van~der Heijden(2013)}]{al2013interval}
\bibinfo{author}{Al~Hanbali, A.}, \& \bibinfo{author}{van~der Heijden, M.}
  (\bibinfo{year}{2013}).
\newblock \bibinfo{title}{Interval availability analysis of a two-echelon,
  multi-item system}.
\newblock {\it \bibinfo{journal}{European journal of operational research}\/},
  {\it \bibinfo{volume}{228}\/}, \bibinfo{pages}{494--503}.
%Type = Article
\bibitem[{Albright \& Gupta(1993)}]{albright1993steady}
\bibinfo{author}{Albright, S.~C.}, \& \bibinfo{author}{Gupta, A.}
  (\bibinfo{year}{1993}).
\newblock \bibinfo{title}{Steady-state approximation of a multiechelon
  multi-indentured repairable-item inventory system with a single repair
  facility}.
\newblock {\it \bibinfo{journal}{Naval Research Logistics (NRL)}\/},  {\it
  \bibinfo{volume}{40}\/}, \bibinfo{pages}{479--493}.
%Type = Article
\bibitem[{Arts et~al.(2016)Arts, Basten \& Van~Houtum}]{arts2016repairable}
\bibinfo{author}{Arts, J.}, \bibinfo{author}{Basten, R.}, \&
  \bibinfo{author}{Van~Houtum, G.-J.} (\bibinfo{year}{2016}).
\newblock \bibinfo{title}{Repairable stocking and expediting in a fluctuating
  demand environment: Optimal policy and heuristics}.
\newblock {\it \bibinfo{journal}{Operations Research}\/},  {\it
  \bibinfo{volume}{64}\/}, \bibinfo{pages}{1285--1301}.
%Type = Article
\bibitem[{Barlow \& Hunter(1960)}]{barlow1960optimum}
\bibinfo{author}{Barlow, R.}, \& \bibinfo{author}{Hunter, L.}
  (\bibinfo{year}{1960}).
\newblock \bibinfo{title}{Optimum preventive maintenance policies}.
\newblock {\it \bibinfo{journal}{Operations Research}\/},  {\it
  \bibinfo{volume}{8}\/}, \bibinfo{pages}{90--100}.
%Type = Article
\bibitem[{Baynat \& Dallery(1996)}]{Bayna1996}
\bibinfo{author}{Baynat, B.}, \& \bibinfo{author}{Dallery, Y.}
  (\bibinfo{year}{1996}).
\newblock \bibinfo{title}{A product-form approximation method for general
  closed queueing networks with several classes of customers}.
\newblock {\it \bibinfo{journal}{Performance Evaluation}\/},  {\it
  \bibinfo{volume}{24}\/}, \bibinfo{pages}{165--188}.
%Type = Article
\bibitem[{Bitran \& Caldentey(2002)}]{Bitran2002}
\bibinfo{author}{Bitran, G.}, \& \bibinfo{author}{Caldentey, R.}
  (\bibinfo{year}{2002}).
\newblock \bibinfo{title}{Two-class priority queueing system with
  state-dependent arrivals}.
\newblock {\it \bibinfo{journal}{Queueing Systems}\/},  {\it
  \bibinfo{volume}{40}\/}, \bibinfo{pages}{355--382}.
%Type = Article
\bibitem[{Brandwajn \& Begin(2017)}]{Brandwajn2017}
\bibinfo{author}{Brandwajn, A.}, \& \bibinfo{author}{Begin, T.}
  (\bibinfo{year}{2017}).
\newblock \bibinfo{title}{Multi-server preemptive priority queue with general
  arrivals and service times}.
\newblock {\it \bibinfo{journal}{Performance Evaluation}\/},  {\it
  \bibinfo{volume}{115}\/}, \bibinfo{pages}{150--164}.
%Type = Article
\bibitem[{Castro et~al.(2019)Castro, Basten \& van
  Houtum}]{castro2019opportunistic}
\bibinfo{author}{Castro, I.~T.}, \bibinfo{author}{Basten, R.~J.}, \&
  \bibinfo{author}{van Houtum, G.-J.} (\bibinfo{year}{2019}).
\newblock \bibinfo{title}{Opportunistic maintenance for heterogeneous complex
  systems under continuous monitoring}.
\newblock {\it \bibinfo{journal}{Available at SSRN 3440745}\/}, .
%Type = Article
\bibitem[{{De}~Jonge \& Scarf(2019)}]{de2019review}
\bibinfo{author}{{De}~Jonge, B.}, \& \bibinfo{author}{Scarf, P.~A.}
  (\bibinfo{year}{2019}).
\newblock \bibinfo{title}{A review on maintenance optimization}.
\newblock {\it \bibinfo{journal}{European Journal of Operational Research}\/},
  {\it \bibinfo{volume}{Forthcoming}\/}.
%Type = Article
\bibitem[{Dekker(1996)}]{dekker1996applications}
\bibinfo{author}{Dekker, R.} (\bibinfo{year}{1996}).
\newblock \bibinfo{title}{Applications of maintenance optimization models: a
  review and analysis}.
\newblock {\it \bibinfo{journal}{Reliability Engineering \& System Safety}\/},
  {\it \bibinfo{volume}{51}\/}, \bibinfo{pages}{229--240}.
%Type = Article
\bibitem[{Diaz \& Fu(1997)}]{diaz1997models}
\bibinfo{author}{Diaz, A.}, \& \bibinfo{author}{Fu, M.~C.}
  (\bibinfo{year}{1997}).
\newblock \bibinfo{title}{Models for multi-echelon repairable item inventory
  systems with limited repair capacity}.
\newblock {\it \bibinfo{journal}{European Journal of Operational Research}\/},
  {\it \bibinfo{volume}{97}\/}, \bibinfo{pages}{480--492}.
%Type = Article
\bibitem[{Drent et~al.(2020)Drent, Kapodistria \& Boxma}]{drent2020censored}
\bibinfo{author}{Drent, C.}, \bibinfo{author}{Kapodistria, S.}, \&
  \bibinfo{author}{Boxma, O.} (\bibinfo{year}{2020}).
\newblock \bibinfo{title}{Censored lifetime learning: Optimal bayesian
  age-replacement policies}.
\newblock {\it \bibinfo{journal}{Operations Research Letters}\/},  {\it
  \bibinfo{volume}{48}\/}, \bibinfo{pages}{827--834}.
%Type = Article
\bibitem[{Drent \& Arts(2021)}]{drent2021expediting}
\bibinfo{author}{Drent, M.}, \& \bibinfo{author}{Arts, J.}
  (\bibinfo{year}{2021}).
\newblock \bibinfo{title}{Expediting in two-echelon spare parts inventory
  systems}.
\newblock {\it \bibinfo{journal}{Manufacturing \& Service Operations
  Management}\/},  {\it \bibinfo{volume}{23}\/}, \bibinfo{pages}{1431--1448}.
%Type = Article
\bibitem[{Economou \& Manou(2015)}]{Economou2015}
\bibinfo{author}{Economou, A.}, \& \bibinfo{author}{Manou, A.}
  (\bibinfo{year}{2015}).
\newblock \bibinfo{title}{A probabilistic approach for the analysis of the
  $\displaystyle{M_n/G/1}$ queue}.
\newblock {\it \bibinfo{journal}{Annals of Operations Research}\/}, .
%Type = Article
\bibitem[{Eruguz et~al.(2018)Eruguz, Tan \& van Houtum}]{eruguz2018integrated}
\bibinfo{author}{Eruguz, A.~S.}, \bibinfo{author}{Tan, T.}, \&
  \bibinfo{author}{van Houtum, G.-J.} (\bibinfo{year}{2018}).
\newblock \bibinfo{title}{Integrated maintenance and spare part optimization
  for moving assets}.
\newblock {\it \bibinfo{journal}{IISE Transactions}\/},  {\it
  \bibinfo{volume}{50}\/}, \bibinfo{pages}{230--245}.
%Type = Article
\bibitem[{Graves(1985)}]{graves1985multi}
\bibinfo{author}{Graves, S.~C.} (\bibinfo{year}{1985}).
\newblock \bibinfo{title}{A multi-echelon inventory model for a repairable item
  with one-for-one replenishment}.
\newblock {\it \bibinfo{journal}{Management science}\/},  {\it
  \bibinfo{volume}{31}\/}, \bibinfo{pages}{1247--1256}.
%Type = Article
\bibitem[{Gross \& Ince(1978)}]{gross1978spares}
\bibinfo{author}{Gross, D.}, \& \bibinfo{author}{Ince, J.~F.}
  (\bibinfo{year}{1978}).
\newblock \bibinfo{title}{Spares provisioning for repairable items: Cyclic
  queues in light traffic}.
\newblock {\it \bibinfo{journal}{AIIE Transactions}\/},  {\it
  \bibinfo{volume}{10}\/}, \bibinfo{pages}{307--314}.
%Type = Article
\bibitem[{Gross et~al.(1983)Gross, Miller \& Soland}]{gross1983closed}
\bibinfo{author}{Gross, D.}, \bibinfo{author}{Miller, D.~R.}, \&
  \bibinfo{author}{Soland, R.~M.} (\bibinfo{year}{1983}).
\newblock \bibinfo{title}{A closed queueing network model for multi-echelon
  repairable item provisioning}.
\newblock {\it \bibinfo{journal}{AIIE Transactions}\/},  {\it
  \bibinfo{volume}{15}\/}, \bibinfo{pages}{344--352}.
%Type = Article
\bibitem[{Hu et~al.(2018)Hu, Boylan, Chen \& Labib}]{hu2018or}
\bibinfo{author}{Hu, Q.}, \bibinfo{author}{Boylan, J.~E.},
  \bibinfo{author}{Chen, H.}, \& \bibinfo{author}{Labib, A.}
  (\bibinfo{year}{2018}).
\newblock \bibinfo{title}{Or in spare parts management: A review}.
\newblock {\it \bibinfo{journal}{European Journal of Operational Research}\/},
  {\it \bibinfo{volume}{266}\/}, \bibinfo{pages}{395--414}.
%Type = Article
\bibitem[{Kennedy et~al.(2002)Kennedy, Patterson \&
  Fredendall}]{kennedy2002overview}
\bibinfo{author}{Kennedy, W.}, \bibinfo{author}{Patterson, J.~W.}, \&
  \bibinfo{author}{Fredendall, L.~D.} (\bibinfo{year}{2002}).
\newblock \bibinfo{title}{An overview of recent literature on spare parts
  inventories}.
\newblock {\it \bibinfo{journal}{International Journal of production
  economics}\/},  {\it \bibinfo{volume}{76}\/}, \bibinfo{pages}{201--215}.
%Type = Article
\bibitem[{Kerner(2008)}]{Kerner2008}
\bibinfo{author}{Kerner, Y.} (\bibinfo{year}{2008}).
\newblock \bibinfo{title}{The conditional distribution of the residual service
  time in the mn/g/1 queue}.
\newblock {\it \bibinfo{journal}{Stochastic Models - STOCH MODELS}\/},  {\it
  \bibinfo{volume}{24}\/}, \bibinfo{pages}{364--375}.
%Type = Article
\bibitem[{Lavenberg(1975)}]{Lavenberg1975}
\bibinfo{author}{Lavenberg, S.} (\bibinfo{year}{1975}).
\newblock \bibinfo{title}{The steady-state queueing time distribution for the
  $\displaystyle{M/G/1}$ finite capacity queue}.
\newblock {\it \bibinfo{journal}{Management Science}\/},  {\it
  \bibinfo{volume}{21}\/}, \bibinfo{pages}{501--506}.
%Type = Article
\bibitem[{Lengu et~al.(2014)Lengu, Syntetos \& Babai}]{lengu2014spare}
\bibinfo{author}{Lengu, D.}, \bibinfo{author}{Syntetos, A.~A.}, \&
  \bibinfo{author}{Babai, M.~Z.} (\bibinfo{year}{2014}).
\newblock \bibinfo{title}{Spare parts management: Linking distributional
  assumptions to demand classification}.
\newblock {\it \bibinfo{journal}{European Journal of Operational Research}\/},
  {\it \bibinfo{volume}{235}\/}, \bibinfo{pages}{624--635}.
%Type = Article
\bibitem[{Li \& Tomlin(2022)}]{li2022after}
\bibinfo{author}{Li, C.}, \& \bibinfo{author}{Tomlin, B.}
  (\bibinfo{year}{2022}).
\newblock \bibinfo{title}{After-sales service contracting: Condition monitoring
  and data ownership}.
\newblock {\it \bibinfo{journal}{Manufacturing \& Service Operations
  Management}\/},  {\it \bibinfo{volume}{Forthcoming}\/}.
%Type = Article
\bibitem[{Marie(1979)}]{Marie1979}
\bibinfo{author}{Marie, R.} (\bibinfo{year}{1979}).
\newblock \bibinfo{title}{An approximate analytical method for general queueing
  networks}.
\newblock {\it \bibinfo{journal}{IEEE Transactions on Software Engineering}\/},
   {\it \bibinfo{volume}{SE-5}\/}, \bibinfo{pages}{530--538}.
%Type = Article
\bibitem[{Moinzadeh \& Schmidt(1991)}]{moinzadeh1991s}
\bibinfo{author}{Moinzadeh, K.}, \& \bibinfo{author}{Schmidt, C.~P.}
  (\bibinfo{year}{1991}).
\newblock \bibinfo{title}{An (s- 1, s) inventory system with emergency orders}.
\newblock {\it \bibinfo{journal}{Operations Research}\/},  {\it
  \bibinfo{volume}{39}\/}, \bibinfo{pages}{308--321}.
%Type = Article
\bibitem[{Muckstadt(1973)}]{muckstadt1973model}
\bibinfo{author}{Muckstadt, J.~A.} (\bibinfo{year}{1973}).
\newblock \bibinfo{title}{A model for a multi-item, multi-echelon,
  multi-indenture inventory system}.
\newblock {\it \bibinfo{journal}{Management science}\/},  {\it
  \bibinfo{volume}{20}\/}, \bibinfo{pages}{472--481}.
%Type = Book
\bibitem[{Muckstadt(2004)}]{muckstadt2004analysis}
\bibinfo{author}{Muckstadt, J.~A.} (\bibinfo{year}{2004}).
\newblock {\it \bibinfo{title}{Analysis and Algorithms for Service Parts Supply
  Chains}\/}.
\newblock \bibinfo{publisher}{Springer Science \& Business Media}.
%Type = Book
\bibitem[{Muckstadt \& Sapra(2010)}]{muckstadt2010principles}
\bibinfo{author}{Muckstadt, J.~A.}, \& \bibinfo{author}{Sapra, A.}
  (\bibinfo{year}{2010}).
\newblock {\it \bibinfo{title}{Principles of Inventory Management: When You Are
  Down to Four, Order More}\/}.
\newblock \bibinfo{publisher}{Springer Science \& Business Media}.
%Type = Article
\bibitem[{{Olde Keizer} et~al.(2017){Olde Keizer}, Teunter \&
  Veldman}]{keizer2017joint}
\bibinfo{author}{{Olde Keizer}, M.~C.}, \bibinfo{author}{Teunter, R.~H.}, \&
  \bibinfo{author}{Veldman, J.} (\bibinfo{year}{2017}).
\newblock \bibinfo{title}{Joint condition-based maintenance and inventory
  optimization for systems with multiple components}.
\newblock {\it \bibinfo{journal}{European Journal of Operational Research}\/},
  {\it \bibinfo{volume}{257}\/}, \bibinfo{pages}{209--222}.
%Type = Article
\bibitem[{Oz et~al.(2017)Oz, Adan \& Haviv}]{Oz2017}
\bibinfo{author}{Oz, B.}, \bibinfo{author}{Adan, I.}, \&
  \bibinfo{author}{Haviv, M.} (\bibinfo{year}{2017}).
\newblock \bibinfo{title}{A rate balance principle and its application to
  queueing models}.
\newblock {\it \bibinfo{journal}{Queueing Systems}\/},  {\it
  \bibinfo{volume}{87}\/}, \bibinfo{pages}{95–111}.
%Type = Article
\bibitem[{Park \& Lee(2011)}]{park2011multi}
\bibinfo{author}{Park, C.-W.}, \& \bibinfo{author}{Lee, H.-S.}
  (\bibinfo{year}{2011}).
\newblock \bibinfo{title}{A multi-class closed queueing maintenance network
  model with a parts inventory system}.
\newblock {\it \bibinfo{journal}{Computers \& Operations Research}\/},  {\it
  \bibinfo{volume}{38}\/}, \bibinfo{pages}{1584--1595}.
%Type = Article
\bibitem[{Park \& Lee(2014)}]{park2014approximation}
\bibinfo{author}{Park, C.-W.}, \& \bibinfo{author}{Lee, H.-S.}
  (\bibinfo{year}{2014}).
\newblock \bibinfo{title}{Approximation analysis of multi-class closed queueing
  maintenance networks with a parts inventory system and two-phase coxian time
  distributions}.
\newblock {\it \bibinfo{journal}{Computers \& Operations Research}\/},  {\it
  \bibinfo{volume}{46}\/}, \bibinfo{pages}{23--37}.
%Type = Article
\bibitem[{Poppe et~al.(2017)Poppe, Basten, Boute \&
  Lambrecht}]{poppe2017numerical}
\bibinfo{author}{Poppe, J.}, \bibinfo{author}{Basten, R.~J.},
  \bibinfo{author}{Boute, R.~N.}, \& \bibinfo{author}{Lambrecht, M.~R.}
  (\bibinfo{year}{2017}).
\newblock \bibinfo{title}{Numerical study of inventory management under various
  maintenance policies}.
\newblock {\it \bibinfo{journal}{Reliability Engineering \& System Safety}\/},
  {\it \bibinfo{volume}{168}\/}, \bibinfo{pages}{262--273}.
%Type = Article
\bibitem[{Porras \& Dekker(2008)}]{porras2008inventory}
\bibinfo{author}{Porras, E.}, \& \bibinfo{author}{Dekker, R.}
  (\bibinfo{year}{2008}).
\newblock \bibinfo{title}{An inventory control system for spare parts at a
  refinery: An empirical comparison of different re-order point methods}.
\newblock {\it \bibinfo{journal}{European Journal of Operational Research}\/},
  {\it \bibinfo{volume}{184}\/}, \bibinfo{pages}{101--132}.
%Type = Article
\bibitem[{Ross(2006)}]{Ross2006}
\bibinfo{author}{Ross, S.} (\bibinfo{year}{2006}).
\newblock \bibinfo{title}{Bounding the stationary distribution of the
  $\displaystyle{M/G/1}$ queue size}.
\newblock {\it \bibinfo{journal}{Probability in The Engineering and
  Informational Sciences}\/},  {\it \bibinfo{volume}{20}\/},
  \bibinfo{pages}{571--574}.
%Type = Article
\bibitem[{Saidane et~al.(2013)Saidane, Babai, Aguir \&
  Korbaa}]{saidane2013performance}
\bibinfo{author}{Saidane, S.}, \bibinfo{author}{Babai, M.~Z.},
  \bibinfo{author}{Aguir, M.~S.}, \& \bibinfo{author}{Korbaa, O.}
  (\bibinfo{year}{2013}).
\newblock \bibinfo{title}{On the performance of the base-stock inventory system
  under a compound erlang demand distribution}.
\newblock {\it \bibinfo{journal}{Computers \& Industrial Engineering}\/},  {\it
  \bibinfo{volume}{66}\/}, \bibinfo{pages}{548--554}.
%Type = Article
\bibitem[{Salari \& Makis(2020)}]{salari2020joint}
\bibinfo{author}{Salari, N.}, \& \bibinfo{author}{Makis, V.}
  (\bibinfo{year}{2020}).
\newblock \bibinfo{title}{Joint maintenance and just-in-time spare parts
  provisioning policy for a multi-unit production system}.
\newblock {\it \bibinfo{journal}{Annals of Operations Research}\/},  {\it
  \bibinfo{volume}{287}\/}, \bibinfo{pages}{351--377}.
%Type = Article
\bibitem[{Shanthikumar \& Yao(1989)}]{Shanthikumar1989}
\bibinfo{author}{Shanthikumar, J.~G.}, \& \bibinfo{author}{Yao, D.~D.}
  (\bibinfo{year}{1989}).
\newblock \bibinfo{title}{Stochastic monotonicity in general queueing
  networks}.
\newblock {\it \bibinfo{journal}{Journal of Applied Probability}\/},  {\it
  \bibinfo{volume}{26}\/}, \bibinfo{pages}{413–417}.
%Type = Article
\bibitem[{Sherbrooke(1968)}]{sherbrooke1968metric}
\bibinfo{author}{Sherbrooke, C.~C.} (\bibinfo{year}{1968}).
\newblock \bibinfo{title}{Metric: A multi-echelon technique for recoverable
  item control}.
\newblock {\it \bibinfo{journal}{Operations research}\/},  {\it
  \bibinfo{volume}{16}\/}, \bibinfo{pages}{122--141}.
%Type = Article
\bibitem[{Slay(1984)}]{slay1984vari}
\bibinfo{author}{Slay, F.} (\bibinfo{year}{1984}).
\newblock \bibinfo{title}{Vari-metric: An approach to modeling multi-echelon
  resupply when the demand process is poisson with a gamma prior, washington
  dc}.
\newblock {\it \bibinfo{journal}{Logistics Management Institute, Report
  AF301-3}\/}, .
%Type = Article
\bibitem[{Sleptchenko et~al.(2018)Sleptchenko, Al~Hanbali \&
  Zijm}]{sleptchenko2018joint}
\bibinfo{author}{Sleptchenko, A.}, \bibinfo{author}{Al~Hanbali, A.}, \&
  \bibinfo{author}{Zijm, H.} (\bibinfo{year}{2018}).
\newblock \bibinfo{title}{Joint planning of service engineers and spare parts}.
\newblock {\it \bibinfo{journal}{European journal of operational research}\/},
  {\it \bibinfo{volume}{271}\/}, \bibinfo{pages}{97--108}.
%Type = Article
\bibitem[{Song \& Zipkin(2009)}]{song2009inventories}
\bibinfo{author}{Song, J.-S.}, \& \bibinfo{author}{Zipkin, P.}
  (\bibinfo{year}{2009}).
\newblock \bibinfo{title}{Inventories with multiple supply sources and networks
  of queues with overflow bypasses}.
\newblock {\it \bibinfo{journal}{Management Science}\/},  {\it
  \bibinfo{volume}{55}\/}, \bibinfo{pages}{362--372}.
%Type = Article
\bibitem[{Svoboda et~al.(2021)Svoboda, Minner \& Yao}]{svoboda2021typology}
\bibinfo{author}{Svoboda, J.}, \bibinfo{author}{Minner, S.}, \&
  \bibinfo{author}{Yao, M.} (\bibinfo{year}{2021}).
\newblock \bibinfo{title}{Typology and literature review on multiple supplier
  inventory control models}.
\newblock {\it \bibinfo{journal}{European Journal of Operational Research}\/},
  {\it \bibinfo{volume}{293}\/}, \bibinfo{pages}{1--23}.
%Type = Article
\bibitem[{Tian et~al.(2021)Tian, Sun \& Duenyas}]{tian2021optimal}
\bibinfo{author}{Tian, F.}, \bibinfo{author}{Sun, P.}, \&
  \bibinfo{author}{Duenyas, I.} (\bibinfo{year}{2021}).
\newblock \bibinfo{title}{Optimal contract for machine repair and maintenance}.
\newblock {\it \bibinfo{journal}{Operations Research}\/},  {\it
  \bibinfo{volume}{69}\/}, \bibinfo{pages}{916--949}.
%Type = Article
\bibitem[{Tiemessen \& Van~Houtum(2013)}]{tiemessen2013reducing}
\bibinfo{author}{Tiemessen, H.}, \& \bibinfo{author}{Van~Houtum, G.}
  (\bibinfo{year}{2013}).
\newblock \bibinfo{title}{Reducing costs of repairable inventory supply systems
  via dynamic scheduling}.
\newblock {\it \bibinfo{journal}{International Journal of Production
  Economics}\/},  {\it \bibinfo{volume}{143}\/}, \bibinfo{pages}{478--488}.
%Type = Article
\bibitem[{Topan et~al.(2020)Topan, Eruguz, Ma, Van Der~Heijden \&
  Dekker}]{topan2020review}
\bibinfo{author}{Topan, E.}, \bibinfo{author}{Eruguz, A.~S.},
  \bibinfo{author}{Ma, W.}, \bibinfo{author}{Van Der~Heijden, M.}, \&
  \bibinfo{author}{Dekker, R.} (\bibinfo{year}{2020}).
\newblock \bibinfo{title}{A review of operational spare parts service logistics
  in service control towers}.
\newblock {\it \bibinfo{journal}{European journal of operational research}\/},
  {\it \bibinfo{volume}{282}\/}, \bibinfo{pages}{401--414}.
%Type = Article
\bibitem[{Turrini \& Meissner(2019)}]{turrini2019spare}
\bibinfo{author}{Turrini, L.}, \& \bibinfo{author}{Meissner, J.}
  (\bibinfo{year}{2019}).
\newblock \bibinfo{title}{Spare parts inventory management: New evidence from
  distribution fitting}.
\newblock {\it \bibinfo{journal}{European Journal of Operational Research}\/},
  {\it \bibinfo{volume}{273}\/}, \bibinfo{pages}{118--130}.
%Type = Article
\bibitem[{Westerweel et~al.(2019)Westerweel, Basten \& van
  Houtum}]{westerweel2019preventive}
\bibinfo{author}{Westerweel, B.}, \bibinfo{author}{Basten, R.~J.}, \&
  \bibinfo{author}{van Houtum, G.-J.} (\bibinfo{year}{2019}).
\newblock \bibinfo{title}{Preventive maintenance with a 3d printing option}.
\newblock {\it \bibinfo{journal}{Available at SSRN 3355567}\/}, .
%Type = Article
\bibitem[{Wu(2014)}]{wu2014classification}
\bibinfo{author}{Wu, K.} (\bibinfo{year}{2014}).
\newblock \bibinfo{title}{Classification of queueing models for a workstation
  with interruptions: a review}.
\newblock {\it \bibinfo{journal}{International Journal of Production
  Research}\/},  {\it \bibinfo{volume}{52}\/}, \bibinfo{pages}{902--917}.
%Type = Article
\bibitem[{Zijm \& Av{\c{s}}ar(2003)}]{zijm2003capacitated}
\bibinfo{author}{Zijm, W.~H.}, \& \bibinfo{author}{Av{\c{s}}ar, Z.~M.}
  (\bibinfo{year}{2003}).
\newblock \bibinfo{title}{Capacitated two-indenture models for repairable item
  systems}.
\newblock {\it \bibinfo{journal}{International Journal of Production
  Economics}\/},  {\it \bibinfo{volume}{81}\/}, \bibinfo{pages}{573--588}.

\end{thebibliography}
\end{document}